\newtheorem{theorem}{Theorem}
\newtheorem*{claim}{Claim}
\newtheorem{assumption}[theorem]{Assumption}
\newtheorem{corollary}[theorem]{Corollary}
\newtheorem{definition}[theorem]{Definition}
\newtheorem{lemma}[theorem]{Lemma}
\newtheorem{proposition}[theorem]{Proposition}
\newtheorem{remark}[theorem]{Remark}
 \newcommand{\eps}{\varepsilon}
 \newcommand{\h}{\mathcal{H}}
 \newcommand{\M}{\mathcal{M}}
\newcommand{\OR}{{\mbox{$p^R$}}}
\newcommand{\OM}{{\mbox{$p^M$}}}
\newcommand{\sint}{\;\begin{picture}(1,1)(0,-3)\circle*{2}\end{picture}\; }
\renewcommand{\phi}{\varphi}
\renewcommand{\epsilon}{\varepsilon}
\renewcommand{\phi}{\varphi}
\newcommand{\E}{\mathbb{E}}
\renewcommand{\P}{\mathbb{P}}
\newcommand{\N}{\mathbb{N}}
\newcommand{\R}{\mathbb{R}}
\newcommand{\BbbR}{\mathbb{R}}
\newcommand{\m}{\mathcal M}
\newcommand{\be}{\begin{equation}}
\newcommand{\ee}{\end{equation}}
\newcommand{\bea}{\begin{eqnarray}}
\newcommand{\bes}{\begin{subequations}}
\newcommand{\ees}{\end{subequations}}
\newcommand{\bgt}{\begin{gather}}
\newcommand{\egt}{\begin{gather}}
\newcommand{\eea}{\end{eqnarray}}
\newcommand{\beaa}{\begin{eqnarray*}}
\newcommand{\eeaa}{\end{eqnarray*}}
\renewcommand\P{\mathcal{P}}
\newcommand{\fourIdx}[5]{%
\setbox1=\hbox{\ensuremath{^{#1}}}%
 \setbox2=\hbox{\ensuremath{_{#2}}}%
 \setbox5=\hbox{\ensuremath{#5}}%
 \hspace{\ifnum\wd1>\wd2\wd1\else\wd2\fi}%
 \ensuremath{\copy5^{\hspace{-\wd1}\hspace{-\wd5}#1\hspace{\wd5}#3}%
 _{\hspace{-\wd2}\hspace{-\wd5}#2\hspace{\wd5}#4}%
 }}
\numberwithin{equation}{section}
\numberwithin{theorem}{section}
\begin{document}
\vspace*{-0.4cm}
\title[Robust FTAP and Super-Replication Theorem]{A Model-free Version of the Fundamental Theorem of Asset Pricing and the Super-Replication Theorem}
\author{B. Acciaio$^{\dagger\ddagger}$}
\author{M. Beiglb\"ock$^{\ddagger}$}
\author{F. Penkner$^{\ddagger}$}
\author{W. Schachermayer$^{\ddagger}$}
\thanks{${}^{\dagger}$ University of Perugia, Department of Economics, Finance and Statistics, Via A. Pascoli 20, I-06123 Perugia\\ 
${}^{\ddagger}$ University of Vienna, Faculty of Mathematics, Nordbergstra\ss{}e 15, A-1090 Wien}
\thanks{The authors thank Marcel Nutz and Mikl\'os R\'asonyi for helpful and relevant comments that lead to an improvement of the paper.}
   



\begin{abstract}
We propose a \emph{Fundamental Theorem of Asset Pricing} and a \emph{Super-Replication Theorem} in a model-independent framework. We prove these theorems in the setting of finite, discrete time and a market consisting of a risky asset $S$ as well as options written on this risky asset. As a technical condition, we assume the existence of a traded option with a super-linearly growing payoff-function, e.g., a power option.
This condition is not needed when sufficiently many vanilla options maturing at the horizon $T$ are traded in the market.
\bigskip

\noindent\emph{Keywords:} Model-independent pricing, Fundamental Theorem of Asset Pricing, Super-Replication Theorem.\\
\emph{Mathematics Subject Classification (2010):} 91G20, 60G42  
\end{abstract}
\maketitle

\maketitle
\section{Introduction}

We consider a finite, discrete time setting and a market consisting of a collection of options $\varphi_i, i\in I$ written on a risky asset $S$. We allow $I$ to be any set and the $\varphi_i$ any kind of (possibly path-dependent) options written on $S$. In this context we address the following questions:
\begin{itemize}
	\item[(Q1)] Does there exist  an arbitrage opportunity?
	\item[(Q2)] For any additional option written on $S$, what is the range of prices that do not create an arbitrage opportunity?
\end{itemize}
These questions have been widely investigated and exhaustively answered in the classical model-dependent framework, where assumptions are made on the dynamics of the underlying process $S$, see \cite{Sc10,Ca10} and the references therein.

In the recent paper we study these problems without making any model assumption. Instead, we consider the set of all models which are compatible with the prices observed in the market, i.e., we follow the \emph{model-independent approach} to financial mathematics.
A particular case is the situation when one observes the prices of finitely many European call options. This is the setup studied in Davis and Hobson \cite{DaHo07}, where the authors identify three possible cases: \emph{absence of arbitrage}, \emph{model-independent arbitrage} and some weaker form of \emph{model-dependent arbitrage}.
In particular, Davis and Hobson find that the expected dichotomy between the existence of a suitable martingale measure and the existence of a model-independent arbitrage does not hold in this specific setting; there can exist a third possibility in which there exists no suitable martingale measure but only model-dependent arbitrage opportunities (cf.\ \cite[Def.\ 2.3]{DaHo07}) can be constructed. A related notion of \emph{weak arbitrage} is considered by Cox and Ob{\l}{\'o}j \cite{CoOb11a}, where also the notion of \textit{weak free lunch with vanishing risk (WFLVR)} \cite[Def.\ 2.1)]{CoOb11a} is introduced in order to tackle the case of infinitely many given options. In the present paper we consider, possibly infinitely many, general path-dependent options and rule out the possibility of \emph{weak} or \emph{model-dependent arbitrage} by assuming that at least one option with super-linearly growing payoff can be bought in the market. This is the key ingredient to obtain the model-free version of the \emph{
Fundamental Theorem 
of Asset Pricing} given in Theorem~\ref{MFTAP_intro} which provides an answer to question (Q1).

In defining \emph{arbitrage} we follow \cite{DaHo07}, where the concept of \emph{model-independent arbitrage} is introduced in a very natural way, namely via semi-static strategies. A semi-static strategy consists of a static portfolio in finitely many options whose prices are known at time zero, and a dynamic, self-financing strategy in the underlying $S$. We say that a \emph{model-independent arbitrage} exists when there is a semi-static portfolio with zero initial value and with strictly positive value at the terminal date. Strict positivity here pertains to \emph{all} possible scenarios;  there is no a priori reference measure to define a notion of \emph{almost all} scenarios. Pioneering work in this regard was done by Hobson in \cite{Ho98a}; we refer to \cite[Section 2.6]{Ho11} for a detailed account of semi-static strategies and robust hedging. 
Cousot \cite{Co04,Co07}, Buehler \cite{Bu06} and Carr and Madan \cite{CaMa05} consider as given the prices of European call options and give, in different settings, necessary and sufficient conditions for the existence of calibrated arbitrage-free models. Davis, Ob{\l}{\'o}j and Raval~\cite[Theorem 3.6]{DaObRa10} tackle  the case where a finite number of put options plus one additional European option with convex payoff is given; also the relevance for robust super-replication is discussed. In a one-period setting and assuming the prices of finitely many options, Riedel \cite{Ri11} proves a robust Fundamental Theorem of Asset Pricing w.r.t.\ a weak notion of arbitrage.\footnote{Under the assumption of a compact state-space, a Super-Replication Theorem is obtained as a corollary in \cite{Ri11}.}
In continuous time the situation is more delicate; for a discussion in this 
setting we refer to Cox and Ob{\l}{\'o}j~\cite{CoOb11a} 
and Davis, Ob{\l}{\'o}j and Raval~\cite{DaObRa10}.

Heading for a Fundamental Theorem of Asset Pricing, the second issue concerns the pricing measures under consideration. Since we do not assume as given a reference measure, the obvious approach consists in considering as \emph{admissible martingale measures} all probability measures on the path-space $\R^T_+$ which are consistent with the observed option prices and under which the coordinate process is a martingale in its own filtration. In this setup we obtain Theorem~\ref{MFTAP_intro}, which connects the absence of arbitrage with the existence of an admissible pricing measure.

Having discussed this relation, it is natural to address the problem of super-replicating any other option written on $S$. The strategies used for replication again are of the semi-static kind described above. A central question is whether a model-free \emph{Super-Replication Theorem} holds true: 
 given a path-dependent derivative $\Phi$, does the minimal endowment $\OR(\Phi)$ required for \emph{super-replication} equal the \emph{upper martingale price} $\OM(\Phi)$ obtained as the supremum of the expected value over admissible martingale measures? 
In a series of  impressive achievements, Brown, Cox, Davis, Hobson, Klimmek, Madan, Neuberger, Ob{\l}{\'o}j, Pederson, Raval, Rogers,  Wang, Yor, and others \cite{Ro93, Ho98a,BrHoRo01a, HoPe02,MaYo02, CoHoPe08, DaObRa10, CoOb11a,CoOb11b, CoWa11, HoNe12,HoKl12} were able to determine the values $\OR(\Phi)$ and $\OM(\Phi)$ explicitly for specific choices of $\Phi$, showing in particular that they coincide. 
For an overview of the recent achievements we recommend the survey by Hobson \cite{Ho11}. In the approach used by these authors, dominating tools are various Skorokhod-embedding techniques; we refer to the extensive overview given by Ob{\l}{\'o}j in \cite{Ob04}.  
In a discrete time setup, without assuming market-information,
Deparis and Martini \cite{DeMa04} establish the above duality for  $\Phi$ satisfying a particular growth condition.
In a recent article Nutz \cite{Nu13} focuses on optimal super-replication strategies in a (discrete time) setup where super-replication is understood w.r.t.\ a family of probability measures rather than in a path-wise sense.

Recently the super-replication problem in the model-free setting has been addressed via a new connection to the theory of optimal transport; see \cite{GaHeTo11, TaTu12, BeHePe11}. In \cite{GaHeTo11} Galichon, Henry-Labord{\`e}re and Touzi systematically use a controlled stochastic dynamics approach, building on results of Tan and Touzi \cite{TaTu12}. This enables the authors to derive the equation $\OM(\Phi)=\OR(\Phi) $ in the context of the look-back option when the terminal marginal of the underlying is known, recovering in particular results from \cite{Ho98a}. 
This viewpoint is developed further in \cite{ObHeSpTo12} to include market information at intermediate times. 
In a discrete time setup the duality theory of optimal transport can be used to prove $\OR(\Phi)=\OM(\Phi)$ for general
path-dependent $\Phi$ assuming knowledge on the intermediate marginals, see \cite{BeHePe11}. In continuous time (assuming information on the terminal marginal) Dolinsky and Soner \cite{DoSo12} are able to establish the relation $\OR(\Phi)=\OM(\Phi)$ for a large class of path-dependent derivatives. A robust super-replication result in a discrete time setting which also takes proportional transaction costs into account is established in \cite{DoSo13}.
 
In the present article, although inspired by the theory, we do not explicitly use results from optimal mass transport. Instead, we approach the Super-Replication Theorem using the classical route, i.e.\ through the Fundamental Theorem of Asset Pricing (Theorem~\ref{MFTAP_intro}). We obtain the relation  $\OR(\Phi)=\OM(\Phi)$ under fairly general assumptions on the given market-information. In particular we recover the main result of \cite{BeHePe11} as a special case.

\subsection*{Fundamental Theorem of Asset Pricing}
We consider a finite, discrete time setting, with time horizon $T\in\N$, and a risky asset $S=(S_t)_{t=0}^T$, where $S_0$ is a positive real number which denotes the price of $S$ to date. Formally, we take $ S$ to be the canonical process $S_t(x_1, \ldots, x_T)=x_t$ on the path-space $\Omega=\R^{T}_+= [0,\infty)^T$.\footnote{We remark that the results obtained below are also valid in  the case where $S$ is allowed to take values on the whole real line. The proofs carry over to this setup without requiring significant changes.} We also assume that there exists a risk free asset $B=(B_t)_{t=0}^T$ which is normalized to $B_t\equiv 1$.   This setup allows for all possible choices of models since every non-negative stochastic process $S=(S_t)_{t=0}^T$ can be realized using the corresponding measure on the path-space. 

Let $I$ be some index set and $\varphi_i: \BbbR^T_+\to \BbbR$, $i\in I$, the payoff functions of options on the underlying $S$ that can be \emph{bought} on the market at time $t=0$. W.l.o.g.\ we assume that they can be bought at price $0$.
We assume that, if an option $\varphi$ can be both \emph{bought and sold}, then bid and ask prices coincide. In this case we simply include $\pm \varphi$ among the $\varphi_i$.
Consequently the set of admissible measures is defined as
\begin{equation} \label{def:admissible_measures}
\P_{(\varphi_i)_{i\in I}}:=\left\{\pi\in\P(\R^T_+) : \int_{\R^T_+}\varphi_i(x)\,d\pi(x)\leq 0,\, i\in I\right\},
\end{equation}
where $\P(\R^T_+)$ denotes the set of all probability measures on $\R^T_+$.
\begin{definition}[Trading strategies]\label{def.adm}
A trading strategy $\Delta=(\Delta_t)_{t=0}^{T-1} $ consists of Borel measurable  functions  $\Delta_t:\R^t_+\to \R$, where $0\le t <T$. The set of all such strategies will be denoted by $\h$. For the stochastic integral we use the notation
\[
(\Delta \sint x)_T := \sum_{t=0}^{T-1}\Delta_t(x_1,\ldots,x_t)(x_{t+1}-x_t),
\]
so that $(\Delta\sint S)_T$ represents the gains or losses obtained by trading according to $\Delta$. 
\end{definition}

The set of \emph{martingale measures} $\M$ consists of all probabilities on $\R^T_+$ with finite first moment such that the canonical process $S$ is a martingale in its natural filtration. Therefore, the set of admissible martingale measures is given by
\begin{equation} \label{def:admissible_mart_measures}
\M_{(\varphi_i)_{i\in I}}:= \P_{(\varphi_i)_{i\in I}}\cap \M.\end{equation}
 As mentioned above, we define \emph{arbitrage} via semi-static strategies, following \cite[Def.\ 2.1]{DaHo07}.
\begin{definition}[Arbitrage]
There is \emph{model-independent arbitrage} if there exists a trading strategy $\Delta\in \h$  and if there exist constants $a_1,\ldots, a_N\geq 0$ and indices $i_1, \ldots, i_N \in I$ such that
\begin{equation}\label{eq:arb}
f(x_1, \ldots, x_T)=\sum\limits^N_{n=1} a_n\varphi_{i_n}(x_1, \ldots, x_T) + (\Delta \sint x)_T>0
\end{equation}
for all $x_1, \ldots, x_T\in\R_+$.
\end{definition}
We emphasize the fact that the present definition \emph{model-independent} arbitrage requires the strict inequality in \eqref{eq:arb} to hold true \emph{surely}, i.e., on the whole path-space $\R_+^T$.

In the Fundamental Theorem of Asset Pricing given below (Theorem~\ref{MFTAP_intro}) we assume the existence of an option with a super-linearly growing payoff $\phi_0(S)= g(S_T)$ for some convex super-linear function $g:\R_+\to \R$.

\begin{theorem}[FTAP]\label{MFTAP_intro}
Let $\varphi_i, i\in I$ be continuous functions on $\R_+^T$. Let $g:\R_+\to \R$ be a convex super-linear function, i.e., $\lim_{x\to \infty}\frac{g(x)}{x}=\infty$, and assume $\varphi_0$ to be of the form $\varphi_0(S)=g(S_T)$, where we suppose that $0$ is an element of the index-set $I$.
Assume also that 
\begin{equation}\label{cond2}
\lim_{\|x\|\to\infty} \tfrac{\varphi_i(x)^+}{m(x)} <\infty\quad \mbox{ and }\quad \lim_{\|x\|\to\infty} \tfrac{\varphi_i(x)^-}{m(x)} =0,\qquad i\in I,
\end{equation}
where $m(x_1, \ldots, x_T):= \sum_{t=1}^T g(x_t)$.
Then the following are equivalent:
\begin{enumerate}[(i)]
\item\label{it:NA} There is no model-independent arbitrage.
\item\label{it:EMM} $\M_{(\varphi_i)_{i\in I}}\ne\emptyset$.
\end{enumerate}
\end{theorem}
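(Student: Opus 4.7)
The implication (\ref{it:EMM}) $\Rightarrow$ (\ref{it:NA}) is a short integration argument. For any $\pi\in\M_{(\varphi_{i})_{i\in I}}$, convexity of $g$ together with the martingale property of $S$ yields via Jensen $\int g(S_{t})\,d\pi\le\int g(S_{T})\,d\pi\le 0$ for every $t\le T$; hence $m$ is $\pi$-integrable and so is each $\varphi_{i}$ by \eqref{cond2}. If $f=\sum_{n}a_{n}\varphi_{i_{n}}+(\Delta\sint x)_{T}>0$ were a model-independent arbitrage, then $(\Delta\sint S)_{T}$ would be bounded below by the integrable random variable $-\sum_{n}a_{n}\varphi_{i_{n}}$, and applying the martingale property step by step together with Fatou on the negative parts gives $\E_{\pi}\bigl[(\Delta\sint S)_{T}\bigr]\le 0$; hence $0<\int f\,d\pi\le\sum_{n}a_{n}\int\varphi_{i_{n}}\,d\pi\le 0$, a contradiction.

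For the main direction (\ref{it:NA}) $\Rightarrow$ (\ref{it:EMM}) the plan is a Hahn--Banach argument in the weighted Banach space $C_{\psi}(\R_{+}^{T})$, where $\psi:=1+m$ and $\|f\|_{\psi}:=\sup_{x}|f(x)|/\psi(x)$. Every $\varphi_{i}$ belongs to $C_{\psi}$ by \eqref{cond2}, while $\varphi_{0}=g(S_{T})$ dominates a multiple of $\psi$ at infinity thanks to the super-linearity of $g$. Consider the convex cone
\[
\mathcal{K}:=\Bigl\{f\in C_{\psi}:\ f\le \textstyle\sum_{n=1}^{N}a_{n}\varphi_{i_{n}}+(\Delta\sint x)_{T}\ \text{pointwise, for some } N,\ a_{n}\ge 0,\ i_{n}\in I,\ \Delta\in\mathcal{H}\Bigr\}
\]
of $C_{\psi}$-claims super-replicable at zero cost. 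Condition (\ref{it:NA}) is precisely the statement that no constant $\epsilon>0$ belongs to $\mathcal{K}$.

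The geometric Hahn--Banach theorem, once one verifies that the closure of $\mathcal{K}$ in a suitable locally convex topology on $C_{\psi}$ still avoids the positive constants, yields a continuous linear functional $\ell$ on $C_{\psi}$ with $\ell(1)>0$ and $\ell\le 0$ on $\mathcal{K}$. Since $-C_{\psi}^{+}\subset\mathcal{K}$ (take $a_{n}=0$, $\Delta=0$) and $\pm(\Delta\sint x)_{T}\in\mathcal{K}$ for every sufficiently regular $\Delta$, one obtains $\ell\ge 0$ on $C_{\psi}^{+}$ and $\ell\bigl((\Delta\sint x)_{T}\bigr)=0$. A Riesz-type representation for positive linear functionals on $C_{\psi}$ then produces a finite Borel measure $\pi$ with $\ell(f)=\int f\,d\pi$ and $\int\psi\,d\pi<\infty$; normalising so that $\ell(1)=1$ makes $\pi$ a probability. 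The inequalities $\int\varphi_{i}\,d\pi\le 0$ express admissibility, and $\int(\Delta\sint x)_{T}\,d\pi=0$ extends from continuous $\Delta$ to arbitrary Borel $\Delta$ by a monotone-class/approximation argument, encoding the martingale identities $\E_{\pi}[x_{t+1}-x_{t}\mid x_{1},\dots,x_{t}]=0$.

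The principal obstacle will be the closedness step for $\mathcal{K}$ (or a slight enlargement of it). This is exactly where the super-linear option $\varphi_{0}=g(S_{T})$ is indispensable: by absorbing a small non-negative multiple of $\varphi_{0}$ into any super-replicating combination one gains control of the growth of $(\Delta\sint x)_{T}$ by $\psi$, and via tightness and compactness of the resulting families of probability measures one can pass to limits of approximating strategies and option combinations without mass escaping to infinity. Without such a super-linear hedge, $\mathcal{K}$ can fail to be closed and the Davis--Hobson pathology of weak or model-dependent arbitrage (discussed in the introduction) may arise.
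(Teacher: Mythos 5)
The easy direction $\eqref{it:EMM}\Rightarrow\eqref{it:NA}$ of your proposal is fine and is essentially the paper's argument (the paper cites the Jacod--Shiryaev results on discrete-time martingale transforms where you sketch the Fatou/step-by-step argument). The problem is the main direction $\eqref{it:NA}\Rightarrow\eqref{it:EMM}$: your plan hinges on two steps that you do not carry out, and they are exactly where the difficulty of the theorem sits. First, the closedness of the cone $\mathcal{K}$ of claims super-replicable at zero cost: you name it yourself as ``the principal obstacle'', but the proposed remedy (absorb a small multiple of $\varphi_0$, then use ``tightness and compactness of the resulting families of probability measures'') is circular --- before the separation there are no measures in sight, and in this model-free setting there is no reference measure, hence no $L^\infty$/weak-star, Krein--\v{S}mulian or Fatou-closure machinery available. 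Moreover, the hypothesis \eqref{it:NA} only excludes hedgeable payoffs that are strictly positive pointwise; it does not by itself prevent positive constants from lying in the \emph{closure} of $\mathcal{K}$, so showing that the closure still misses the constants is essentially as hard as the theorem itself. Second, your ``Riesz-type representation'' is not available as stated: a positive continuous linear functional on the weighted space $C_\psi$ need not be induced by a countably additive measure --- it can have a singular part supported ``at infinity'' (think of a Banach-limit-type functional $f\mapsto \mathrm{LIM}\, f/\psi$). Discarding that singular part while preserving $\ell(\varphi_i)\le 0$ and $\ell(1)>0$ is precisely where the asymmetric conditions \eqref{cond2} (namely $\varphi_i^-/m\to 0$ while only $\varphi_i^+/m$ stays bounded) must be put to work; your sketch never engages with this. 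A minor additional point: you do not need, and in general cannot have, $\int(\Delta\sint x)_T\,d\pi=0$ for \emph{arbitrary} Borel $\Delta\in\h$ (integrability fails); continuous bounded $\Delta$ suffice to characterize martingale measures.

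For comparison, the paper avoids both obstacles by a different decomposition. It never separates the cone of dynamically hedgeable claims: Proposition~\ref{Mprop:FTAP1} is a purely \emph{static}, \emph{finite} result in which the set separated from the open positive cone of $C^b_{\bar m}$ is the compact simplex of convex combinations of finitely many options (plus $m$), so Hahn--Banach applies with no closedness issue, and the singular part of the separating functional (viewed on the Stone--\v{C}ech compactification) is removed using exactly the growth conditions \eqref{cond2}. Dynamic trading and the possibly infinite family $(\varphi_i)_{i\in I}$ are then handled on the \emph{dual} side: the martingale property is encoded by the constraints $\int(\Delta\sint x)_T\,d\pi=0$ for continuous bounded $\Delta$, every finite subfamily of constraints (together with $m$, via the auxiliary payoff $\varphi_{-1}$ built from calendar-spread positions in $\varphi_0$) yields a nonempty set of admissible measures by the static proposition, and these sets are weakly compact --- tightness coming from $\int m\,d\pi\le 0$ and the superlinearity of $g$, closedness again from \eqref{cond2} --- so the intersection over all finite families is nonempty. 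In other words, the superlinear option is used to gain compactness of \emph{measures}, not to close a cone of \emph{claims}. Unless you can genuinely establish the closedness of $\mathcal{K}$ and dispose of the singular functionals on $C_\psi$, your argument does not go through as written.
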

Condition \eqref{cond2} is satisfied, for instance, when the set of the $\varphi_i$ consists of European call options plus one power option $\phi_0$. Note that the second condition in \eqref{cond2} implies that  we cannot sell $\phi_0$ in the market. We can only buy it at a finite, possibly very high, price. Economically, this may be interpreted as the opportunity of an insurance against high values of the stock $S$.

\subsection*{Robust Super-Replication Results} 

As in classical mathematical finance, the Fundamental Theorem of Asset Pricing has a Super-Replication Theorem as immediate corollary. 
\begin{theorem}[Super-Replication]\label{M1thm:superrep}
Let $(\varphi_i)_{i\in I}$ be as in Theorem \ref{MFTAP_intro} and assume that \mbox{$\M_{(\varphi_i)_{i\in I}} \neq \emptyset$}.
Let $\Phi:\R_+^T\to\R$ be u.s.c. and such that
\begin{equation}\label{condwp}
\lim_{\|x\|\to\infty} \tfrac{\Phi(x)^+}{m(x)}=0.
\end{equation}
Then
\begin{align}\label{PrimalRep}
\OM(\Phi):=&\ \sup_{\pi\in\M_{(\varphi_i)_{i\in I}}}\int_{\R^T_+}\Phi(x)\,d\pi(x)\\
=&\ \ \ \inf\ \ \left\{d: \exists a_n\geq 0, \Delta\in \h\, \textrm{ s.t. }\, d+\sum_{n=1}^Na_n\varphi_{i_n}+(\Delta \sint x)_T\geq\Phi\right\}=:\OR(\Phi).\label{DualRep}
\end{align}
In addition, the above supremum is a maximum. 
\end{theorem}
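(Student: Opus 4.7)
The plan is to derive both inequalities from the FTAP (Theorem~\ref{MFTAP_intro}).

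The easy direction $\OM(\Phi)\le\OR(\Phi)$ follows by integrating a super-replication identity $d+\sum_n a_n\varphi_{i_n}+(\Delta\sint x)_T\ge\Phi$ against an arbitrary $\pi\in\M_{(\varphi_i)_{i\in I}}$. Each $\int\varphi_{i_n}\,d\pi\le 0$ by definition, and the stochastic integral has vanishing $\pi$-expectation by the martingale property. The integrability needed to justify this is supplied by (cond2) together with the a-priori estimate $\int m\,d\pi\le 0$, which follows from $\int\varphi_0\,d\pi\le 0$ and Jensen's inequality applied to the convex $g$ on the martingale $S$: $\E_\pi[g(S_t)]\le\E_\pi[g(S_T)]\le 0$ for each $t$.

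For the reverse direction, fix $d<\OR(\Phi)$ and augment the market with a ``synthetic'' option $\psi:=d-\Phi$ at price zero. The augmented market remains arbitrage-free: any model-independent arbitrage $a_0\psi+\sum_n a_n\varphi_{i_n}+(\Delta\sint x)_T>0$ with $a_0=0$ would contradict $\M_{(\varphi_i)_{i\in I}}\ne\emptyset$ via FTAP for the original market, while $a_0>0$, after rescaling by $a_0$, would yield a super-replication of $\Phi$ at cost $d$, contradicting $d<\OR(\Phi)$. Applying FTAP to the augmented market then produces $\pi\in\M_{(\varphi_i)_{i\in I}}$ with $\int\psi\,d\pi\le 0$, i.e.\ $\int\Phi\,d\pi\ge d$; letting $d\uparrow\OR(\Phi)$ yields $\OM(\Phi)\ge\OR(\Phi)$.

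The main technical obstacle is that FTAP requires $\psi$ to be continuous and to satisfy (cond2), whereas $\Phi$ is only u.s.c.\ and $\Phi^-$ may grow faster than $m$. The remedy is a regularization of $\Phi$: approximate from above by continuous functions $f_n\downarrow\Phi$ (standard Lipschitz-envelope construction, possibly after a preliminary upper truncation $\Phi\wedge K$ to ensure finiteness of the envelope) and then truncate from below, setting $\widetilde\Phi_n:=f_n\vee(-n)$. Then $\widetilde\Phi_n$ is continuous with bounded negative part, while $\widetilde\Phi_n^+\le f_n^+$ inherits $\widetilde\Phi_n^+/m\to 0$, so that $\psi_n:=d-\widetilde\Phi_n$ satisfies (cond2) and the above argument yields $\pi_n\in\M_{(\varphi_i)_{i\in I}}$ with $\int\widetilde\Phi_n\,d\pi_n\ge d$. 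Attainment and the passage to the limit are handled simultaneously by compactness: the bound $\int m\,d\pi\le 0$, uniform on $\M_{(\varphi_i)_{i\in I}}$, together with the super-linearity of $g$ yields tightness and uniform $m$-integrability. Extracting a weakly convergent subsequence $\pi_n\to\pi^*$ and invoking a Portmanteau-type statement for u.s.c.\ functions with $m$-dominated growth (so that $\pi\mapsto\int\Phi\,d\pi$ is upper semicontinuous along such sequences) one obtains $\pi^*\in\M_{(\varphi_i)_{i\in I}}$ with $\int\Phi\,d\pi^*\ge d$; taking $d\uparrow\OR(\Phi)$ then establishes both the duality and attainment of the maximum.
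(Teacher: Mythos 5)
Your route is the paper's own: the easy inequality via integrability of the martingale transform, the hard inequality by adjoining the synthetic option $d-\Phi$ (the paper uses $\varphi:=-\Phi+p$ for $\OM(\Phi)<p<\OR(\Phi)$) and invoking Theorem~\ref{MFTAP_intro}, then a monotone approximation of the u.s.c.\ payoff by continuous functions with truncated negative part, and a compactness argument for the limit and for attainment. However, as written your regularization step does not deliver what your FTAP step uses. The dichotomy argument needs $\widetilde\Phi_n\geq\Phi$ \emph{everywhere}: only then does an arbitrage with $a_0>0$ in the augmented market rescale to a super-hedge of $\Phi$ at cost $d$ and contradict $d<\OR(\Phi)$. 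If you first truncate above at a constant, the envelopes decrease to $\Phi\wedge K$, the $a_0>0$ branch only yields a super-hedge of $\Phi\wedge K$, and $d<\OR(\Phi)$ gives no contradiction (nor is $\OR(\Phi\wedge K)\to\OR(\Phi)$ clear --- that is essentially the statement being proved); if you do not truncate, the sup-convolution $\sup_y\{\Phi(y)-n\|x-y\|\}$ may be identically $+\infty$, since \eqref{condwp} allows $\Phi^+$ to grow super-linearly (just slower than $m$). The repair is to truncate above at a \emph{continuous majorant} rather than a constant: by \eqref{condwp} there is a continuous $h\geq\Phi$ with $h^+/\bar m\to0$ (dominate $r\mapsto\sup_{\|y\|_\infty\geq r}\Phi^+(y)/\bar m(y)$ by a continuous function $\tilde\rho\downarrow0$ and set $h:=\tilde\rho(\|x\|_\infty)\,\bar m(x)$); then regularize $u:=\Phi-h\leq0$ by Lipschitz sup-convolutions $u_n\downarrow u$ and set $\Phi_n:=(h+u_n)\vee(-n)$, which is continuous, decreases to $\Phi$, satisfies $\Phi_n\geq\Phi$, $\Phi_n^+/m\to0$ and has bounded negative part --- exactly the approximation the paper asserts. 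With that, your scheme (FTAP at level $d$ for each $\Phi_n$, tightness from $\int m\,d\pi\leq0$, weak limit, closedness of $\M_{(\varphi_i)_{i\in I}}$ and upper semicontinuity of $\pi\mapsto\int\Phi\,d\pi$ as in Steps 1--2 of the FTAP proof) is sound; note that attainment of the maximum requires one further extraction along $d_k\uparrow\OR(\Phi)$, since your $\pi^*$ depends on the fixed $d$.

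A smaller point: in the easy direction, "the stochastic integral has vanishing $\pi$-expectation" is not justified by \eqref{cond2} and $\int m\,d\pi\leq0$ alone, because the super-hedge only bounds $(\Delta\sint x)_T$ from below by $\Phi-d-\sum_n a_n\varphi_{i_n}$, whose negative part involves $\Phi^-$, and $\Phi^-$ is not controlled by $m$. Argue as in Remark~\ref{rmk.js}: if $\int\Phi^-\,d\pi=\infty$ then $\int\Phi\,d\pi=-\infty\leq d$ trivially; otherwise $(\Delta\sint x)_T^-\leq |d|+\sum_n a_n\varphi_{i_n}^++\Phi^-$ is $\pi$-integrable, so the transform is a true martingale with zero mean and integrating the hedge gives $\int\Phi\,d\pi\leq d$.
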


We emphasize that the Super-Replication Theorem perfectly fits the setup of model-independent finance: the financial market provides information about the prices of traded derivatives $\varphi_i, i\in I$. This allows to access the largest reasonable price of the derivative $\Phi$ in two ways.
\begin{enumerate}
\item Following the no-arbitrage pricing paradigm,  one selects a martingale measure $\pi$ which fits to the market prices; the corresponding price for the derivative $\Phi$ equals $\int_{\R^T_+}\Phi(x)\,d\pi(x)$. In general there are infinitely many possible choices for $\pi$ and the robust point of view is to take the martingale measure $\pi$ leading to the largest  value for $\int_{\R^T_+}\Phi(x)\,d\pi(x)$.
 This is  $\OM(\Phi)$ given in \eqref{PrimalRep}. 
\item On the other hand, a robust upper bound to the price of $\Phi$ can be obtained by considering semi-static super-hedges $d+\sum_{n=1}^Na_n\varphi_{i_n}+(\Delta \sint x)_T\geq\Phi$. This approach  was introduced by Hobson (cf.\ \cite{Ho11}) and leads to the 
    value $\OR(\Phi)$ in  \eqref{DualRep}.
\end{enumerate}
Theorem~\ref{M1thm:superrep} asserts that the two approaches are equivalent. (However, while there is always an optimal martingale measure, the existence of an optimal super-hedge is in general not guaranteed.)

The results presented so far required that the market sells a financial derivative $\phi_0(S)=g(S_T)$ where $g$ grows super-linearly. This assumption can be avoided, provided that a sufficient amount of call options written on $S_T$ is traded on the market. 
For instance, it suffices to assume that there is a sequence of strikes  $K_n, n\geq 1, K_n\to \infty$  such that the call options $\psi_{K_n}=(S_T-K_n)_+$ can be bought in the market at price $p_n$, where $p_n\to 0$ as $n\to \infty$. This is spelled out in detail in Corollary \ref{M2thm:superrep} below; in this introductory section we just present a particular consequence.

A prevalent assumption in the theory of model-independent pricing is that the  distribution of $S_T$ can be deduced from market data. 
This is due to the important observation of  Breeden and Litzenberger~\cite{BrLi78} that knowing the law $\nu$ of $S_T$ is equivalent to knowing the prices $p_K$ of $ (S_T-K)_+$ for all strikes $K\geq 0$. The price of an arbitrary European derivative $\phi(S_T)$ is then given by $\E_{\nu}[ \phi(S_T)]=\int_{\R_+}\phi(y)\,d\nu(y)$. We write $\M(\nu)$ for the set of all martingale measures $\pi$  satisfying $S_T(\pi) =\nu$. 
Of course, this set is non-empty if and only if the first moment of $\nu$ exists and equals $S_0$.  
\begin{corollary}[Super-Replication]\label{thm:superrep_term_marg}
Assume that $\nu$ is a probability measure on $\R_+$ with finite first moment and barycenter $S_0$.
Let $\Phi:\R^T_+\to\R$ be u.s.c.\ and linearly bounded from above.
Then
\begin{align*}
\OM(\Phi) :=&\ \sup_{\pi\in\mathcal{M}(\nu)}\left\{\textstyle \int_{\R^T_+} \Phi(x)\,d\pi(x)\right\}\\
=&\ \ \ \inf\ \ \Big\{\textstyle \int_{\R_+}\varphi(y)\,d\nu(y):\varphi\in L^1(\nu), \exists\, \Delta\in\h \ \textrm{s.t.}\ 
\varphi(x_T)+(\Delta\sint x)_T\geq \Phi(x) \Big\}=:\OR(\Phi).
\end{align*}
In addition, the above supremum is a maximum.

More generally these results hold true if there exists a convex super-linear function $\tilde g:\R_+\to\R$ in $L^1(\nu)$ such that 
\begin{equation}
\lim_{\|x\|\to\infty} \tfrac{\Phi(x)^+}{\sum_{t=1}^T\tilde g(x_t)}<\infty.
\end{equation}
\end{corollary}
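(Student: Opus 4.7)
The natural plan is to reduce Corollary~\ref{thm:superrep_term_marg} to Theorem~\ref{M1thm:superrep} by choosing a family $(\varphi_i)_{i\in I}$ of ``traded options'' that encodes precisely the knowledge of the marginal $\nu$. Concretely, set
\[
\varphi_K^{\pm}(x) := \pm\Bigl[(x_T-K)_+ - p_K\Bigr],\qquad K\in[0,\infty),\quad p_K := \int_{\R_+}(y-K)_+\,d\nu(y),
\]
and add one super-linear option $\varphi_0(x) := \tilde g(x_T) - \int \tilde g\, d\nu$, where in the first part of the statement we pick, via de la Vall\'ee-Poussin applied to the single measure $\nu$, some convex super-linear $\tilde g:\R_+\to\R$ with $\tilde g\in L^1(\nu)$. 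Since all these options are continuous and have $\nu$-mean $0$, and since $\tilde g-\int\tilde g\,d\nu$ is itself convex and super-linear, Theorem~\ref{M1thm:superrep} applies with $g=\tilde g-\int\tilde g\,d\nu$ once the growth condition~\eqref{cond2} is checked: the negative parts of all $\varphi_i$ are bounded (for the call differences) or bounded-plus-sublinear (for $\varphi_0$), and by super-linearity $x_T/m(x)\to 0$ as $\|x\|\to\infty$, which also yields $\Phi^+/m\to 0$ from either the linear growth assumption or the explicit hypothesis on $\Phi$ in the generalised statement.

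The identification $\M_{(\varphi_i)_{i\in I}}=\M(\nu)$ is then immediate: a martingale measure $\pi$ lies in the left-hand set iff $\int (x_T-K)_+\,d\pi=p_K$ for every $K\geq 0$, and by Breeden--Litzenberger this holds iff $S_T(\pi)=\nu$. In particular the set is non-empty, since $\nu$ has barycenter $S_0$ implies the existence of a martingale measure with this marginal (e.g., take $\pi$ supported on the path $(S_0,\ldots,S_0,x_T)$ with $x_T\sim\nu$). Theorem~\ref{M1thm:superrep} now gives the desired equality between $\OM(\Phi)=\sup_{\pi\in\M(\nu)}\int\Phi\,d\pi$ and
\[
\widehat{\OR}(\Phi) := \inf\Bigl\{d:\exists\,a_n\geq 0,\ \Delta\in\h\ \mathrm{s.t.}\ d+\textstyle\sum_n a_n\varphi_{i_n}+(\Delta\sint x)_T\geq \Phi\Bigr\},
\]
together with attainment of the supremum.

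It remains to identify $\widehat{\OR}(\Phi)$ with $\OR(\Phi)$ as defined in the corollary. For one inequality, note that every admissible combination $\psi(x_T) := d+\sum_n a_n\varphi_{i_n}(x_T)$ appearing in $\widehat{\OR}$ is a function of $x_T$ alone and belongs to $L^1(\nu)$, with $\int\psi\,d\nu = d$ since each $\varphi_{i_n}$ has $\nu$-mean zero. Hence any competitor in $\widehat{\OR}$ is a competitor in $\OR$, giving $\OR(\Phi)\leq \widehat{\OR}(\Phi)=\OM(\Phi)$. The reverse inequality is just weak duality: for $\pi\in\M(\nu)$ and any super-hedge $\varphi(x_T)+(\Delta\sint x)_T\geq\Phi$ with $\varphi\in L^1(\nu)$, integration against $\pi$ yields $\int\Phi\,d\pi\leq \int\varphi\,d\nu$ once we know $\E_\pi[(\Delta\sint x)_T]\leq 0$. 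Chaining these inequalities gives $\OM(\Phi)=\widehat{\OR}(\Phi)\leq \OR(\Phi)\leq \OM(\Phi)$, hence equality, and the maximum in the primal transfers directly from Theorem~\ref{M1thm:superrep}.

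The main technical obstacle I anticipate is the weak duality step: in a model-free setting $(\Delta\sint x)_T$ need not be $\pi$-integrable, so one must argue that for $\pi\in\M(\nu)$ the stochastic integral still has non-positive expectation whenever the super-hedging inequality places a ``$\nu$-integrable'' lower bound on it. This is the same martingale/Fatou argument that underlies the proof of Theorem~\ref{M1thm:superrep} and should transfer without modification; the growth hypotheses on $\Phi$ are designed exactly to make this step legitimate. A minor, purely technical point is verifying the existence of the convex super-linear $\tilde g\in L^1(\nu)$ in the first part of the statement, which is a one-line application of de la Vall\'ee-Poussin to the probability measure $x\,d\nu(x)/\int x\,d\nu$.
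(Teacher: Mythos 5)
Your route is correct in outline, but it is genuinely different from the paper's. The paper deduces the corollary from Corollary~\ref{M2thm:superrep} (via Assumption~\ref{ass}) and, in full generality, from Corollary~\ref{thm.nui}: there the super-linear payoff is not added as a separate traded asset but is \emph{synthesized} from a countable family of calls with strikes $K_n\nearrow\infty$, using the decomposition \eqref{eq.gak} of a convex super-linear $\bar g\in L^1(\nu)$ into call payoffs; the dual then contains an infinite static call position $\sum_n b_n\alpha_n\tilde\psi_n$ which is re-read as a single element of $L^1(\nu)$. You instead declare the option $\tilde g(x_T)-\int\tilde g\,d\nu$ (price $0$) tradable alongside all calls $\pm[(x_T-K)_+-p_K]$ and apply Theorem~\ref{M1thm:superrep} directly; this is legitimate here because the added option has $\nu$-mean zero (so $\M_{(\varphi_i)_{i\in I}}=\M(\nu)$ by Breeden--Litzenberger, non-empty since $\nu$ has barycenter $S_0$), and because every finite static combination is a function of $x_T$ in $L^1(\nu)$ with $\nu$-integral equal to the initial endowment, which gives $\OR(\Phi)\le\widehat{\OR}(\Phi)=\OM(\Phi)$. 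Your shortcut buys a more direct argument that avoids Assumption~\ref{ass} and the expansion \eqref{eq.gak}; the paper's longer route buys the intermediate result Corollary~\ref{M2thm:superrep}, whose dual uses only genuinely traded calls. Your treatment of weak duality is also consistent with the paper's (Remark~\ref{rmk.js}): either $\int\Phi^-\,d\pi=\infty$ and there is nothing to prove, or $(\Delta\sint x)_T^-\le\Phi^-+\varphi(x_T)^+$ is $\pi$-integrable and the integral is a true martingale.

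There is, however, one concrete slip in the ``more generally'' part. Theorem~\ref{M1thm:superrep} requires \eqref{condwp}, i.e.\ $\lim_{\|x\|\to\infty}\Phi(x)^+/m(x)=0$, whereas the hypothesis of the generalized statement only gives $\lim_{\|x\|\to\infty}\Phi(x)^+/\sum_t\tilde g(x_t)<\infty$; with your choice $g=\tilde g-\int\tilde g\,d\nu$ the ratio need not tend to zero, so your claim that the explicit hypothesis ``also yields $\Phi^+/m\to0$'' is not justified as written. The fix is exactly the paper's Lemma~\ref{lemma.int} (a second application of de la Vall\'ee-Poussin): replace $\tilde g$ by a convex super-linear $\bar g\in L^1(\nu)$ with $\bar g/\tilde g\to\infty$ and build $\varphi_0$ and $m$ from $\bar g$; then \eqref{cond2} and \eqref{condwp} hold and the rest of your argument goes through unchanged. (For the first part, your single de la Vall\'ee-Poussin application to obtain some super-linear $\tilde g\in L^1(\nu)$ already suffices, since a linear bound on $\Phi^+$ gives the required limit zero.)
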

In the same spirit we also recover  \cite[Theorem 1]{BeHePe11} which corresponds to the Super-Replication Theorem in the particular case where all marginals $S_t\sim\mu_t$, $t=1, \ldots, T$ are known (see Corollary \ref{BHP}). 

Knowing that there is no duality-gap, a natural question is whether the infimum over super-replication strategies is in fact a minimum. In general, this is not the case. In  \cite[Section 4.3]{BeHePe11} a counterexample is given in a setup where $T=2$ and the function $\Phi$ is uniformly bounded. 
As a remedy it may be useful to consider a  \emph{relaxed} notion of super-replication strategies. 
E.g., such ``weak minimizers'' are the critical tool in \cite[Appendix A]{BeJu12}.  

\subsection*{Connection with Martingale Inequalities}
Assume that $\Phi, \phi$ are functions satisfying some proper integrability assumption.
A  path-wise hedging inequality of the form 
\begin{align}\label{08.15} \Phi(x_1,\ldots,x_T) \leq \phi(x_1, \ldots, x_T)+ (\Delta\sint x)_T,\quad x_1, \ldots, x_T\in \R_+,\end{align}
implies that for every martingale $S=(S_t)_{t=1}^T$ we have
$$ \E[\Phi(S_1, \ldots, S_T)] \leq \E[\phi(S_1, \ldots, S_T)].$$
This follows by applying the inequality \eqref{08.15} to the paths of $S$ and taking expectations. 
In short, every  path-wise hedging inequality yields a martingale inequality as a direct consequence. 

Conversely one may ask if a given martingale inequality can be established in this way, i.e.\ as a consequence of a path-wise hedging inequality of the form \eqref{08.15}. In Section \ref{SSSS} below we explain why this can be expected as  a consequence of the Super-Replication Theorem~\ref{M1thm:superrep}.  An early version of this result motivated the path-wise approach to the Doob $L^p$-inequalities given in \cite{AcBePeScTe12}.

\subsection*{Organization of the paper}

 In Sections~\ref{sect:ftap} and \ref{sect:srt} we prove the Fundamental Theorem of Asset Pricing (Theorem~\ref{MFTAP_intro}) and the Super-Replication Theorem (Theorem~\ref{M1thm:superrep}), respectively. 
 Section~\ref{LastOne} collects different super-replication results which do not require the existence of super-linearly growing derivatives in the market. Finally we discuss the relation between robust Super-replication and Martingale Inequalities in Section \ref{SSSS}. 

\section{Fundamental Theorem of Asset Pricing}\label{sect:ftap}

In the definition of model-independent arbitrage we have used trading strategies $\Delta\in \h$ which depend on $S$ measurably but need not be bounded. In particular, $(\Delta\sint S)$  is not necessarily integrable w.r.t.\ a martingale measure $\pi\in \m$. The following remark takes care of this shortcoming.

\begin{remark}\label{rmk.js}
For every $\Delta\in\h$ and $\pi\in\M$, the process $M=(M_t)_{t=0}^T$ defined as
$$M_0:=0,\quad M_t:=(\Delta \sint x)_t,\, t=1,\ldots,T$$
is a discrete-time $\pi$-martingale transform, and hence a $\pi$-local-martingale by Theorem 1 in \cite{JaSh98}. Moreover, if $\int(\Delta \sint x)_T^+ \, d\pi(x)<\infty$ or $\int(\Delta \sint x)_T^- \, d\pi(x)<\infty$, then $M$ is a true $\pi$-martingale, by Theorem 2 in \cite{JaSh98}.
\end{remark}

As a consequence of Remark \ref{rmk.js},
the existence of a martingale measure in $\m_{(\phi_i)_{i\in I}}$ implies that there is no model-independent arbitrage.
\begin{proof}[Proof of Theorem~\ref{MFTAP_intro}, $\eqref{it:EMM}\Rightarrow\eqref{it:NA}$.]
Pick $\pi\in\M_{(\varphi_i)_{i\in I}}$ and assume that there exists $f(x)=\sum_{n=1}^Na_n\varphi_{i_n}(x)+(\Delta\sint x)_T$, where $a_n\geq 0$ and $\Delta\in \h$ such that $f>0$. This gives $\int(\Delta \sint x)_T^-\, d\pi(x)<\infty$, which then, by Remark~\ref{rmk.js}, implies $\sum_{n=1}^Na_n\int\varphi_{i_n}(x)\, d\pi(x)>0$ contradicting the admissibility of $\pi$.
\end{proof}   
In the same fashion, Remark \ref{rmk.js}
yields the ``economically obvious'' inequality $\OM(\Phi) \leq \OR (\Phi)$ in the above super-replication results. 

\medskip

It is natural to ask why we do not only consider bounded strategies. We explain here why this would be too restrictive for our purposes.
For every convex function $g:\R_+\to \R$ and $x_t, x_{t+1}\in\R_+$
 we have\footnote{At the (at most countably many) points where the convex function $g$ is not differentiable, we define $g'$ as its right derivative.} 
\begin{align}\label{ConvexFHedge}
g(x_t)+g'(x_t)(x_{t+1}-x_{t})\leq g(x_{t+1}). 
\end{align} This simple inequality expresses a fact which is widely known in finance under the name of \emph{calendar spread}: a convex derivative written on $S_{t}$ can be super-replicated using the corresponding derivative written on $S_{t+1}$. To incorporate this argument in our path-wise hedging framework, we need to include $\Delta_t(x_1, \ldots, x_t):=g'(x_t)$ in the set of admissible trading strategies.
 
 Indeed, in showing the non trivial implication $\eqref{it:NA}\Rightarrow\eqref{it:EMM}$ in of Theorem~\ref{MFTAP_intro} (and the non-trivial inequality $\OM(\Phi) \geq \OR(\Phi)$ in our Super-Replication Theorems), it is sufficient to use the no arbitrage assumption on a subset of $\h$ which  consists entirely of strategies $\Delta$ such that $(\Delta\sint S)$ is $\pi$-integrable for all $\pi\in \m_{(\phi_i)_{i\in I}}$.  
 
\begin{definition}[$g$-Admissible Strategy]\label{Gdef.adm}  

Let $g\colon \R^+ \to \R$ a convex, superlinear function. A trading strategy $\Delta=(\Delta_t)_{t=0}^{T-1}$ is called \emph{$g$-admissible} if, for $0\leq t\leq T-1$, $\Delta_t:\R^t_+\to \R$ is a continuous function such that, for some $c\in\R_+$,
\begin{align}\label{OurAd}|\Delta_t(x_1, \ldots, x_t)(x_{t+1}-x_t)|\leq c \Big(1\vee \sum_{s=1}^{t+1}g(x_s)\Big).
\end{align}
The set of all $g$-admissible trading strategies is denoted by $\h_g$.
\end{definition}

Trivially we have $\h_g\subseteq \h$.
We briefly comment on the integrability properties of the set $\h_g$.  
Assume that $\pi$ is a  martingale measure on $\R_+^T$ such that $\int g(x_T)\,d\pi(x)< \infty$. By Jensen's inequality we then have 
$\int g(x_t)\,d\pi(x)< \infty$ also for all $t<T$. Thus for $\Delta \in \h_g$, \eqref{OurAd} implies that
$$\int_{\R^T_+} |\Delta_t(x_1, \ldots, x_t)(x_{t+1}-x_t)|\, d\pi(x)<\infty.$$
Disintegrating $\pi$ w.r.t.\ $(x_1, \ldots, x_t)$ it moreover follows that 
$$ \int_{\R^T_+} \Delta_t(x_1, \ldots, x_t)(x_{t+1}-x_t)\, d\pi(x)=0.$$
 Note also that  by \eqref{ConvexFHedge}
 $|g'(x_t)(x_{t+1}-x_{t})| \leq |g(x_t)|+|g(x_{t+1})|$, hence $\Delta_t(x_1, \ldots, x_t):=g'(x_t)$ is $g$-admissible.

In the following proposition we use the notation introduced in \eqref{def:admissible_measures} for the set of admissible measures. Recall that we write $m(x_1, \ldots, x_T)=\sum_{t=1}^T g(x_t)$.
\begin{proposition}\label{Mprop:FTAP1}
Let $\varphi_i:\R^T_+\to\R$, $i=1,\ldots,N$ be continuous functions satisfying
\begin{equation}\label{cond2J}
\lim_{\|x\|\to\infty} \tfrac{\varphi_i(x)^+}{m(x)} <\infty\quad \mbox{and}\quad \lim_{\|x\|\to\infty} \tfrac{\varphi_i(x)^-}{m(x)} =0
\end{equation}
and set $\varphi_{N+1}:= m$ and $\bar{m}:= m \vee 1$.
TFAE:
\begin{enumerate}[(i)]
 \item There is no $f =\sum^{N+1}_{n=1}a_n\varphi_n$ with $a_n\geq 0$ s.t.
  $$
    f(x) >0\quad \mbox{ for all } ~ x\in \R^T_+.
  $$
  \item[(i')] There is no $f=\sum^{N+1}_{n=1}a_n\varphi_n$ with $a_n\geq 0$ s.t.
    $$
      f(x) \geq m(x)\quad \mbox{ for all } ~ x\in \R^T_+.
    $$
  \item $\P_{(\varphi_i)_{i=1}^{N+1}}\neq \emptyset.$
\end{enumerate}
\end{proposition}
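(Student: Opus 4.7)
The plan is to close the cycle (ii) $\Rightarrow$ (i) $\Rightarrow$ (i') $\Rightarrow$ (ii), reducing the problem to a finite-dimensional Hahn--Banach separation of the image of admissible moment vectors in $\R^{N+1}$.

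The direction (ii) $\Rightarrow$ (i) is immediate: if $\pi$ is admissible and $f = \sum_{n=1}^{N+1} a_n \varphi_n$ with $a_n \geq 0$ is strictly positive, then $0 < \int f \, d\pi = \sum_n a_n \int \varphi_n \, d\pi \leq 0$, a contradiction. For the equivalence (i) $\Leftrightarrow$ (i') (which I read with $\bar m$ rather than $m$ on the right-hand side, since otherwise $f = \varphi_{N+1} = m$ would be a trivial witness against (i')), one direction is immediate since $\bar m \geq 1$ implies $f \geq \bar m \Rightarrow f > 0$. The other direction starts from $f > 0$ in the cone, uses the compactness of $\{m \leq 1\}$ (which follows from super-linearity of $g$) to obtain $\delta := \inf_{\{m \leq 1\}} f > 0$, and verifies that $\lambda f + m \geq \bar m$ pointwise for $\lambda$ large enough, so that the non-negative combination $\lambda f + \varphi_{N+1}$ dominates $\bar m$.

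The substance is (i') $\Rightarrow$ (ii). I introduce the moment-vector set
\[
A := \Bigl\{\bigl(\textstyle\int \varphi_i \, d\pi\bigr)_{i=1}^{N+1} : \pi \in \P(\R_+^T),\ \textstyle\int \bar m\, d\pi < \infty\Bigr\}, \qquad A^* := A + \R_+^{N+1} \subset \R^{N+1},
\]
so that (ii) is equivalent to $0 \in A^*$. The critical technical step is that $A^*$ is closed and convex in $\R^{N+1}$. Given $b^k = (\int \varphi_i \, d\pi^k)_i + r^k \to b$ with $r^k \in \R_+^{N+1}$, boundedness of $(b^k)_{N+1}$ combined with $m$ being bounded below (the convex super-linear $g$ attains a minimum on $\R_+$) forces $\int \bar m \, d\pi^k$ bounded; super-linearity of $g$ then makes $\bar m$ a tightness function, so $\{\pi^k\}$ is tight and admits a weakly convergent subsequence $\pi^{k_j} \to \pi$. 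The two halves of \eqref{cond2J} are used in a complementary fashion: $\varphi_i^-/m \to 0$ delivers uniform integrability, yielding $\int \varphi_i^- \, d\pi^{k_j} \to \int \varphi_i^- \, d\pi$, whereas $\varphi_i^+ \leq C \bar m$ together with Fatou only gives $\int \varphi_i^+ \, d\pi \leq \liminf \int \varphi_i^+ \, d\pi^{k_j}$. Combining these yields $\int \varphi_i \, d\pi \leq \liminf \int \varphi_i \, d\pi^{k_j}$, and the non-negative gap is absorbed into a new residual in $\R_+^{N+1}$, so $b \in A^*$.

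With $A^*$ closed convex and $0 \notin A^*$ (the negation of (ii)), strict hyperplane separation in $\R^{N+1}$ produces $a \in \R^{N+1} \setminus \{0\}$ and $c > 0$ with $\langle a, y\rangle \geq c$ for every $y \in A^*$. The invariance $A^* + \R_+^{N+1} \subseteq A^*$ forces each $a_i \geq 0$, and evaluation at Dirac measures $\delta_x \in A$ gives the pointwise lower bound $\sum_{i=1}^{N+1} a_i \varphi_i(x) \geq c$ for every $x \in \R_+^T$. Rescaling $a$ so that this bound exceeds $1$ plus the uniform lower bound of $-m$ and then adding $\varphi_{N+1} = m$, a short case split on whether $m(x) \geq 1$ or $m(x) < 1$ shows that the resulting non-negative combination dominates $\bar m$ pointwise, contradicting (i'). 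This closes the cycle. The principal obstacle is the closedness of $A^*$, where the asymmetric growth condition \eqref{cond2J} is essential to convert weak convergence of a tight sequence of probability measures into the one-sided inequality required for each $\int \varphi_i \, d\pi$; the remainder is standard finite-dimensional separation and elementary rescaling.
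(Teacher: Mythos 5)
Your proposal is correct, but it takes a genuinely different route from the paper. The paper proves $(i')\Rightarrow(ii)$ by an infinite-dimensional Hahn--Banach argument: it works in the weighted space $C^b_{\bar m}(\R^T_+)$, identified isometrically with $C(\check{\R^T_+})$ via division by $\bar m$, separates the compact convex set $K=\{\sum_{n=1}^{N+1} a_n\varphi_n:\ a_n\ge0,\ \sum_n a_n=1\}$ from the open positive cone $(C^b_{\bar m})_{++}$, decomposes the separating functional as $\mu=\mu^r+\mu^s$ into regular and singular parts, uses \eqref{cond2J} together with the presence of $\varphi_{N+1}=m$ to show $\mu^r\neq 0$ and that $\mu^r$ still separates, and finally reweights $\mu^r$ by $1/\bar m$ to obtain the admissible probability. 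You instead reduce to a finite-dimensional separation in $\R^{N+1}$ of $0$ from the moment set $A^*=A+\R_+^{N+1}$, so the analytic burden shifts to the closedness of $A^*$, which you establish by coercivity of $\bar m$, the uniform bound on $\int\bar m\,d\pi^k$ extracted from the $(N{+}1)$-st coordinate, Prokhorov, uniform integrability of $\varphi_i^-$ (second half of \eqref{cond2J}) and Fatou/portmanteau for $\varphi_i^+$ (first half); this is essentially the tightness-and-closedness mechanism the paper only deploys later, in Steps 1--2 of the proof of Theorem~\ref{MFTAP_intro}, so your argument front-loads it into the Proposition and thereby dispenses with the Stone--\v{C}ech compactification and the regular/singular decomposition entirely. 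The trade-off is clear: the paper's separation needs no closedness (compact set versus open cone) but pays with the machinery of $\mathcal{M}(\check{\R^T_+})$, whereas your route is more elementary at the price of the closedness proof; in both arguments \eqref{cond2J} plays the identical role of killing the negative parts ``at infinity'' while controlling the positive parts only one-sidedly. Your reading of $(i')$ with $\bar m$ on the right-hand side is also well taken: as literally stated, $f=\varphi_{N+1}=m$ would trivially violate $(i')$, and your version is precisely what the paper's own proof uses when it deduces $K\cap(C^b_{\bar m})_{++}=\emptyset$; your elementary proof of $(i)\Leftrightarrow(i')$ via compactness of $\{m\le1\}$ and the combination $\lambda f+\varphi_{N+1}$, as well as the final rescaling producing a combination dominating $\bar m$, are sound (the latter is in fact redundant once $(i)\Leftrightarrow(i')$ is in place, since the separation already yields $\sum_i a_i\varphi_i\ge c>0$, i.e.\ the negation of $(i)$).
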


\begin{proof}
The only non trivial implication is $(i')\Rightarrow (ii)$:  Consider the Banach space $C_{\bar{m}}^b(\R^T_+)$ of continuous functions $f$ on $\R^T_+$ such that
$$\|f\|_{C_{\bar{m}}^b}= \sup\limits_{x\in\R^T_+} \tfrac{|f(x)|}{\bar{m}(x)} <\infty.$$
The norm is designed in such a way that the multiplication operator $T_{\bar{m}} :C^b_{\bar{m}} (\R^T_+)\to C^b(\R^T_+)$
$$T_{\bar{m}} (f) =\tfrac{f}{\bar{m}}$$
is an isometry, where the Banach space $C^b(\R^T_+)$ of bounded continuous functions $h$ on $\R^T_+$ is endowed with
$$\|h\|_{C^b} =\sup\limits_{x\in\R^T_+} |h(x)| <\infty.$$
Recall that $C^b(\R^T_+)$ may be identified with the space $C(\check{\R^T_+})$ of continuous functions on the Stone-Cech-compactification 
$\check{\R^T_+}$ of $\R^T_+$. 
Hence the dual space of $C^b(\R^T_+)$ can be identified with $\mathcal{M}(\check{\R^T_+}),$ the space of signed Radon measures $\mu$ on $\check{\R^T_+}.$ 
Each $\mu$ can be uniquely decomposed into $\mu=\mu^r+\mu^s$, where the regular part $\mu^r$ is supported by $\R^T_+$ while the singular part $\mu^s$ is supported by 
$\check{\R^T_+} \backslash \R^T_+$.
The bottom line of these considerations is that a continuous linear functional $F$ on $(C^b_{\bar{m}}(\R^T_+),\|\cdot\|_{C^b_{\bar{m}}})$ is given by some 
$\mu=\mu^r+\mu^s\in\mathcal{M}(\check{\R^T_+})$ via
\begin{align}\label{A9}
F(f)& =\int\tfrac{f(x)}{\bar{m}(x)}\,d\mu(x)\\
&=\int \tfrac{f(x)}{\bar{m}(x)} \,d\mu^r(x)+\int\tfrac{f(x)}{\bar{m}(x)}\,d\mu^s(x),\quad \mbox{for} \ f\in C^b_{\bar{m}}(\R^T_+).  \nonumber
\end{align}
Finally observe that the interior of the positive orthant of $C^b_{\bar{m}}(\R^T_+)$ is given by
$$(C^b_{\bar{m}})_{++}(\R^T_+)=\left\{f\in C^b_{\bar{m}} : \inf\limits_{x\in\R^T_+}\tfrac{f(x)}{\bar{m}(x)} >0\right\},$$
as one easily sees from the isometric identification of $C^b_{\bar{m}}(\R^T_+)$ with $C(\check{\R^T_+}).$

\medskip

Turning to the present setting, define $K$ as the compact, convex set in $C^b_{\bar{m}}(\R^T_+)$
$$
K:= \left\{\sum^{N+1}_{n=1} a_n\varphi_n : a_n\geq 0,\sum\limits^{N+1}_{n=1} a_n=1\right\}.
$$
By assumption $(i')$ we have
$$K\cap (C^b_{\bar{m}})_{++}(\R^T_+)=\emptyset,$$
so that we may apply Hahn-Banach to find a linear functional $F\in C^b_{\bar{m}}(\R^T_+)^*$ separating $K$ from $(C^b_{\bar{m}})_{++}(\R^T_+),$ i.e. some $\mu =\mu^r+\mu^s\in\mathcal{M}
(\check{\R^T_+})$ such that
\begin{equation}\label{A11}
\int\tfrac{f(x)}{\bar{m}(x)}\,d\mu(x)>0 \qquad\qquad\textrm{for all}\, f\in(C^b_{\bar{m}})_{++}(\R^T_+),
\end{equation}
while
\begin{equation}\label{A11a}
\int\tfrac{f(x)}{\bar{m}(x)}\,d\mu(x)\le0 \qquad\qquad\textrm{for all}\, f\in K.
\end{equation}
Clearly \eqref{A11} implies that $\mu=\mu^r+\mu^s$ is positive. We first observe that we have $\mu^r\ne 0.$ Indeed, supposing $\mu^r=0$, we find  
\begin{align*}
\int\tfrac{\varphi_{N+1}(x)}{\bar{m}(x)}\,d\mu(x) &=\int\tfrac{\varphi_{N+1}(x)}{\bar{m}(x)}\,d\mu^s(x)=\int 1 \,d\mu^s(x)=\|\mu^s\|>0
\end{align*}
and this is in contradiction to \eqref{A11a}.

We now claim that $\mu^r$ also separates $K$ from $(C^b_{\bar{m}})_{++}(\R^T_+).$ On the one hand, $\mu^r$ is a positive measure on $\R^T_+$. 
Hence \eqref{A11} still holds true, with $\mu$ replaced by $\mu^r$.
On the other hand, for each $1\le n\le N+1,$ we have
  $$
  \int\tfrac{\varphi_n(x)}{\bar{m}(x)}\,d\mu^r(x)\le \int\tfrac{\varphi_n(x)}{\bar{m}(x)}\,d\mu(x)\le 0.
  $$
The second inequality follows from \eqref{A11a}. For the first inequality it suffices to remark that
  $$\int\tfrac{\varphi_n(x)^-}{\bar{m}(x)}\,d\mu^s(x)=0, \qquad n=1,\dotsc ,N+1,$$
  by \eqref{cond2J}.
By normalizing $\mu^r$ to $\pi:=\tfrac{\mu^r}{\|\mu^r\|}$, we find a positive probability $\pi$ on $\R^T$ with $\int\tfrac{\varphi_n(x)}{\bar{m}(x)}\,d\pi(x)\le 0,$ for $n=1,\ldots,N+1$.
Now define $\hat\pi$ by
$$
\tfrac{\,d\hat\pi}{\,d\pi}=\tfrac{1}{\bar{m}}\left(\int\tfrac{1}{\bar{m}}\,d\pi\right)^{-1}.
$$
We have that $\hat\pi$ is a positive probability on $\R^T$ with $\int\varphi_n\,d\hat\pi\le 0,$ for $n=1,\ldots,N+1$, which shows that $\P_{(\varphi_i)_{i=1}^{N+1}}\ne\emptyset.$ 
\end{proof}  
The above proposition is the basis for the proof of the non-trivial part of Theorem \ref{MFTAP_intro}. In the course of the argument we also use the following characterization of martingale measures.
\begin{equation} \label{def:admissible_mart_measures2}
\M=\left\{\pi\in\P(\R_+^T): 
\begin{array}{c} \mbox{$S_t$ has finite first moment w.r.t.\ $\pi$, $t\leq T$}\\
 \int_{\R^T_+}(\Delta\sint x)_T\,d\pi(x)=0,\; \Delta\in\mathcal{C}_b ,
\end{array}
\right\},
\end{equation}
where $\Delta\in\mathcal{C}_b$ means that $\Delta_t(x_1,\dotsc,x_t)$ is continuous and bounded for all $t=0,\ldots,T-1$. The proof of \eqref{def:admissible_mart_measures2} is straightforward, see for instance \cite{BeHePe11}.

\begin{proof}[Proof of Theorem~\ref{MFTAP_intro}, $\eqref{it:NA}\Rightarrow\eqref{it:EMM}$]
In fact, we prove a stronger result. We show that $(i)^*\Rightarrow\eqref{it:EMM}$, where condition $(i)^*$ is defined as
\begin{itemize}
\item[$(i)^*$] There is no model-independent arbitrage such that $\Delta\in\h_g$ (see Definition~\ref{Gdef.adm}).
\end{itemize}
Recall that  $\varphi_0(x_1, \ldots, x_T)= g(x_T)$ and set 
$$\varphi_{-1}(x_1, \ldots, x_T):=- \sum_{t=1}^{T-1} g'(x_t)(x_T-x_t) + T g(x_T).$$
Note that since no arbitrage strategy can be constructed using the option $\varphi_0$, and since $g'(x_t), t<T$ are $g$-admissible trading strategies, it follows that no arbitrage strategy can be constructed with the help of $\varphi_{-1}$. 
We make the crucial observation that due to the convexity of $g$ we have $ m\leq  \varphi_{-1}$ (see \eqref{ConvexFHedge}). Moreover, if $\pi$ is a martingale measure,
then $\int_{\R^T_+} \varphi_{-1}\,d\pi = \int_{\R^T_+} T\varphi_0 \,d\pi$. 
Note that  $ \M_{(\varphi_i)_{i\in I}}= \M_{(\varphi_i)_{i\in I}, m}$ by Jensen's inequality.\footnote{For notational convenience we use the abbreviation $\M_{(\varphi_i)_{i\in I}, m}$ for 
$\M_{\{ \varphi_i \colon i\in I\} \cup \{ m \}}$.}
We will use a compactness argument to show that this set is not empty.

Assume that we are given finite families $F_1, F_2$, where 
$F_1 \subseteq I$ and
\begin{equation}\label{FamDef}
\{ \varphi_i\}_{i\in F_2}\subseteq \{\Delta_t(x_1, \ldots, x_t)(x_{t+1}-x_t): t< T, \Delta_t \in C_b(\R^t_+)\}.
\end{equation}
Then there exists no arbitrage, in the sense of Proposition \ref{Mprop:FTAP1}, for the family 
\begin{align}
 \{\varphi_i\}_{i\in F_1\cup F_2\cup\{0\}\cup\{-1\}}.
\end{align}
Since $m\leq \varphi_{-1}$ there is still no arbitrage opportunity if we replace $\varphi_{-1}$ by $m$.  Since the functions $\Delta_t$ in \eqref{FamDef} are taken to be continuous and bounded we may apply Proposition \ref{Mprop:FTAP1} to the family 
$$ \{\varphi_i\}_{i\in F_1\cup F_2 \cup\{0\}}$$
to obtain that 
$$\P_{\{\varphi_i\}_{i\in F_1\cup F_2 \cup\{0\}},m}\neq \emptyset.$$
Since 
$$
\M_{(\varphi_i)_{i\in I}}=\bigcap_{F_1,F_2} \P_{\{\varphi_i\}_{i\in F_1\cup F_2 \cup\{0\}},m},
$$ 
it remains to prove that $\P_{\{\varphi_i\}_{i\in F_1\cup F_2 \cup\{0\}},m}$ is compact.

\medskip 

  Step 1. Relative compactness.\\
We show that the set $\P_{\{\varphi_i\}_{i\in F_1\cup F_2 \cup\{0\}},m}$ is tight, hence relatively compact by Prokhorov's theorem. First we recall that $\lim_{\|x\|\to\infty}\tfrac{m(x)}{\|x\|}=\infty$ and that $\int_{\R^T_+} m \, \,d\pi \le 0$ for $\pi \in \P_{\{\varphi_i\}_{i\in F_1\cup F_2 \cup\{0\}},m}$. This implies $-\infty<-a:=\min m < 0$ and that
for all $\delta$ there is $k_\delta$ s.t. $m > \frac1\delta$ on $K_\delta^c$, where $K_\delta:=[0,k_\delta]^{T}$. Hence
\begin{equation}\label{in1}
\int_{K_\delta^c} m \,d\pi\geq\frac1\delta\pi\left(K_\delta^c\right).
\end{equation}
Furthermore
\begin{equation*}
0\geq \int_{\R^T_+} m \,d\pi=\int_{K_\delta} m\,d\pi+\int_{K_\delta^c} m\,d\pi\geq -a\pi(K_\delta)+\int_{K_\delta^c} m\,d\pi,
\end{equation*}
that is,
\begin{equation}\label{in2}
\int_{K_\delta^c} m\,d\pi\leq a\pi(K_\delta).
\end{equation}
Putting things together we obtain
\begin{equation*}
\pi(K_\delta^c)\leq \delta\int_{K_\delta^c} m\,d\pi\leq\delta a\pi(K_\delta)\leq\delta a.
\end{equation*}
This proves that for each fixed $\epsilon>0$ there is $k$ ($=k_\delta$ for $\delta=\epsilon/a$) such that $\pi\left(([0,k]^{T})^c\right)\leq\epsilon$ for all $\pi\in\P_{\{\varphi_i\}_{i\in F_1\cup F_2 \cup\{0\}},m}$. Hence, $\P_{\{\varphi_i\}_{i\in F_1\cup F_2 \cup\{0\}},m}$ is tight and thus relatively compact by Prokhorov's theorem.\\
   
   Step 2. Closedness.\\
   Let $\pi_n\in\P_{\{\varphi_i\}_{i\in F_1\cup F_2 \cup\{0\}},m}$ be such that $(\pi_n)$ converges weakly to $\tilde\pi$. We are going to prove that $\tilde\pi\in\P_{\{\varphi_i\}_{i\in F_1\cup F_2 \cup\{0\}},m}$. Since $\tilde\pi$ is clearly a probability measure, we only need to prove that $\tilde\pi$ satisfies the admissibility constraints:
\[
\int_{\R^T_+}\varphi\,\,d\tilde\pi\leq 0,\quad\quad \varphi\in\{m,\varphi_i \colon i\in F_1\cup F_2\cup\{0\}\}.
\]
We will consider separately the two integrals $\int_{\R^T_+}\varphi^+\,\,d\tilde\pi$ and $\int_{\R^T_+}\varphi^-\,\,d\tilde\pi$.

First of all, for each $\varphi\in\{m,\varphi_i \colon i\in F_1\cup F_2\cup\{0\}\}$ 
and for every $u \in [0,\infty)$ we have the basic inequality 
\begin{equation*}
\limsup_{n \to \infty} \int_{\R^T_+}\varphi^+\,\,d\pi_n \geq \limsup_{n \to \infty} \int_{\R^T_+}\varphi^+\wedge u\,\,d\pi_n,
\end{equation*}
where the l.h.s.\ is finite due to the first condition in \eqref{cond2} and the r.h.s.\ actually is a \emph{limit} by definition of weak convergence. Taking the limit $u \to \infty$ on both sides, we obtain 
\begin{equation} \label{pos_inequality}
\limsup_{n\to\infty}\int_{\R^T_+}\varphi^+\,\,d\pi_n \geq \lim_{u\to\infty}\lim_{n\to\infty}\int_{\R^T_+}\varphi^+\wedge u\,\,d\pi_n = \lim_{u\to\infty}\int_{\R^T_+}\varphi^+\wedge u\,\,d\tilde\pi=\int_{\R^T_+}\varphi^+\,\,d\tilde\pi,
\end{equation}
by weak convergence and by monotone convergence.

Furthermore, we will show that
\begin{equation}\label{ineqm}
\liminf_{n\to\infty}\int_{\R^T_+}\varphi^-\,\,d\pi_n \le \int_{\R^T_+}\varphi^-\,\,d\tilde\pi.
\end{equation}
Inequality \eqref{pos_inequality} and equation \eqref{ineqm} together then yield  
\[
\int_{\R^T_+}\varphi\,\,d\tilde\pi=\int_{\R^T_+}\varphi^+\,\,d\tilde\pi-\int_{\R^T_+}\varphi^-\,\,d\tilde\pi\leq \limsup_{n\to\infty}\int_{\R^T_+}\varphi^+\,\,d\pi_n - \liminf_{n\to\infty}\int_{\R^T_+}\varphi^-\,\,d\pi_n = \limsup_{n\to\infty}\int_{\R^T_+}\varphi\,\,d\pi_n\leq 0,
\]
as wanted.

In order to prove \eqref{ineqm} we will use the previous step, that is, for any fixed $\epsilon>0$ there is $k=k_\epsilon>0$ such that $\pi_n(K^c)\leq\epsilon$ for all $n\in\N$, where $K:=[0,k]^{T}$.
By weak convergence of measures we have
\begin{equation}
 \liminf_{n\to\infty}\int_{K}\varphi^-\,\,d\pi_n \le \limsup_{n\to\infty}\int_{K}\varphi^-\,\,d\pi_n \le \int_{K}\varphi^-\,\,d\tilde{\pi}.\label{negative_part_compact}
\end{equation}
Therefore, if $(k_\epsilon)_\epsilon$ is bounded, then we are done. We hence suppose that $k_\epsilon\to\infty$ as $\epsilon\to 0$.
Note that $\int_{\R^T_+}m\,\,d\pi_n\leq 0$ gives $\int_{\R^T_+}(m+a+1)\,\,d\pi_n\leq a+1$, which in turn implies $\int_{A}(m+a+1)\,\,d\pi_n\leq a+1$ for every $A\subseteq \R^T_+$, being $m+a+1$ non-negative (actually, $m+a+1\geq 1$).
Thus for $\varphi\in\{m,\varphi_i \colon i\in F_1\cup F_2\cup\{0\}\}$ we have
\[
a+1\geq\int_{K^c}(m+a+1)\mathbbm{1}_{\varphi^->0}\,\,d\pi_n\geq\int_{K^c}\varphi^-\min_{K^c}\frac{(m+a+1)}{\varphi^-}\mathbbm{1}_{\varphi^->0}\,\,d\pi_n,
\]
which implies
\[
\int_{K^c}\varphi^-\,\,d\pi_n=\int_{K^c}\varphi^-\mathbbm{1}_{\varphi^->0}\,\,d\pi_n\leq(a+1)\max_{K^c}\frac{\varphi^-}{(m+a+1)}.
\]
Now note that for all  $\varphi\in\{m,\varphi_i \colon i\in F_1\cup F_2\cup\{0\}\}$ we have that $\displaystyle{\max_{K^c}\frac{\varphi^-}{(m+a+1)}\to 0}$ as $\epsilon\to 0$. From this it follows that
\begin{equation}
\lim_{n\to\infty}\int_{K^c}\varphi^-\,\,d\pi_n \to 0 \label{negative_part_rest}
\end{equation}
as $\epsilon\to 0$, uniformly in $n$. Together, \eqref{negative_part_compact} and \eqref{negative_part_rest} imply that
\[
\liminf_{n\to\infty}\int_{\R^T_+}\varphi^-\,\,d\pi_n  \le  \int_{\R^T_+}\varphi^-\,\,d\tilde\pi,
\]
as claimed.
This concludes the proof.
\end{proof}

\begin{remark} 
If the stock prices process is not allowed to  take values on the (whole) half-line $\R_+$ but is restricted to a bounded interval $[0,b]$, the above considerations simplify significantly. In this case the path-space is compact, all continuous functions $\phi_i$ are bounded and the set of admissible measures is automatically compact; there is no need to require the existence of options whose payoff grows super-linearly. As a consequence, in this setting the robust FTAP follows in  a straightforward way from the Hahn-Banach Theorem.  
\end{remark}

\section{Super-Replication Theorem}\label{sect:srt}
The Super-Replication Theorem~\ref{M1thm:superrep} is a direct consequence of the Fundamental Theorem of Asset Pricing, Theorem~\ref{MFTAP_intro}.

\begin{proof}[Proof of Theorem~\ref{M1thm:superrep}]
By Remark \ref{rmk.js},
$\OM(\Phi)\leq   \OR(\Phi)$. It remains to prove the converse inequality.
In fact we prove a result which is stronger than the one stated. That is, we show this inequality when using only $g$-admissible strategies in the dual problem, i.e., when replacing $\h$ by $\h_g$ in the minimization problem in \eqref{DualRep}.
Let us first consider the case of continuous $\Phi$ satisfying \eqref{condwp} and
\begin{equation}\label{lowb}
\lim_{\|x\|\to\infty} \tfrac{\Phi(x)^-}{m(x)} < \infty.
\end{equation}
 Now suppose that the inequality is strict, that is, there exists $p$ such that
\begin{equation}\label{eq:p}
  \OM(\Phi)<p<  \OR(\Phi).
\end{equation}
Define $\varphi:=-\Phi+p$ and note that Theorem \ref{MFTAP_intro} applies to the set of constraints $\{\varphi,\varphi_i, i\in I\}$, implying the equivalence of the following:
\begin{itemize}
\item[(i)]\label{it:NA2} $\not\exists$ $f(x)=\sum_{n=1}^Na_n\varphi_{i_n}(x)+{\varphi}(x)+(\Delta\sint x)_T>0$ with $a_n\geq0$ and $\Delta\in\h_g$,
\item[(ii)]\label{it:EMM2}$\M_{(\varphi_i)_{i\in I},{\varphi}}\neq\emptyset$.
\end{itemize}
Therefore, either there exists $\pi\in\M_{(\varphi_i)_{i\in I}}$ such that
\begin{equation}\label{case1}
\int_{\R^T_+}\Phi \,d\pi\geq p,
\end{equation}
or there exist $a_n\geq0$ and $\Delta\in\h_g$ such that
\begin{equation}\label{case2}
p+\sum_{n=1}^Na_n\varphi_{i_n}(x)+(\Delta\sint x)_T>\Phi(x).
\end{equation} 
Note that \eqref{case1} would imply $  \OM(\Phi)\geq p$, in contradiction to the first inequality in \eqref{eq:p}, and that \eqref{case2} would imply $  \OR(\Phi)\leq p$, in contradiction to the second inequality in \eqref{eq:p}. This shows that there is no $p$ as in \eqref{eq:p}, hence the duality stated in the theorem holds for all continuous $\Phi$ which satisfy \eqref{condwp} and \eqref{lowb}.
Now note that any u.s.c.\ function $\Phi$ satisfying \eqref{condwp} can be written as an infimum over continuous functions $\Phi_n,n\in\N$ satisfying \eqref{condwp} and \eqref{lowb}. 
By a standard argument, the duality relation then carries over from $\Phi_n$ to $\Phi$. This is worked out in detail for instance in \cite[Proof of Thm.\ 1]{BeHePe11} in a very similar setup. 
At the same place the reader can find the argument showing that the supremum in \eqref{PrimalRep} is attained. 
\end{proof}

\section{Ramifications of the Super-Replication result}\label{LastOne}
We start with a corollary of the previous results which avoids the asymmetry present in the requirements on $\varphi_0$.
To achieve this, we assume that there exists a sequence of call options written on $S_T$ whose strikes $K_n$ tend to $\infty$. We call $p_n$ the corresponding market prices and use the notation
\begin{equation}\label{eq.psin}
\psi_n(y):=(y-{K_n})_+,\qquad \tilde \psi_n(x):= (\psi_n(x_T)-p_n),\quad n\geq 1.
\end{equation}
\begin{assumption}\label{ass}
Let $\varphi_i:\R_+^T\to\R, i\in I$ be continuous functions including $\tilde \psi_n, n\geq 1$, and assume that $\M_{(\varphi_i)_{i\in I}} \neq \emptyset$.
Let $\alpha_n\geq 0$ be such that $\sum_{n=1}^\infty\alpha_n=\infty$ and $\sum_{n=1}^\infty\alpha_n p_n < \infty$. We set 
\[g_0(y):=\sum_{n=1}^\infty\alpha_n(\psi_n(y)- p_n),\quad m_0(x):=\sum_{t=1}^Tg_0(x_t)\]
 and assume that for all $i\in I$
\begin{equation*}
\lim_{\|x\|\to\infty} \tfrac{\varphi_i(x)^-}{m_0(x)}=0,\qquad
\lim_{\|x\|\to\infty} \tfrac{\varphi_i(x)^+}{m_0(x)}<\infty.
\end{equation*}
\end{assumption}
 Theorem~\ref{M1thm:superrep} can then be applied by setting $g=g_0$. Indeed, since $\tilde \psi_n, n\geq 1$ are already present in the admissibility resp.\ the super-replication condition, it makes no difference whether or not one includes also $g_0$ among the $\varphi_i$.
Hence we obtain:
\begin{corollary}\label{M2thm:superrep}
Let $(\varphi_i)_{i\in I}$ and $(\alpha_n)_{n\ge 1}$ be like in Assumption~\ref{ass}. Let $\Phi:\R_+^T\to\R$ be u.s.c.\ and assume that $\lim_{\|x\|\to\infty} \tfrac{\Phi(x)^+}{m_0(x)}=0$. Then
\begin{align*}
  \OM(\Phi):=&\textstyle\sup_{\pi\in\M_{(\varphi_i)_{i\in I}}}\int_{\R^T_+}\Phi(x)\,d\pi(x)\\
=&\ \ \ \inf\Big\{d: \textstyle \exists a_1,\ldots,a_N\geq 0, i_1,\ldots,i_N\in I, b_n\geq 0,
\sup_n b_n\alpha_n<\infty, \Delta\in \h\, \textrm{ s.t. }\\
& \textstyle \qquad\qquad d+\sum_{k=1}^Na_k\varphi_{i_k}+\sum_{n=1}^\infty b_n\alpha_n \tilde \psi_n +(\Delta \sint x)_T \geq\Phi\Big\}=:  \OR(\Phi).
\end{align*}
In addition, the above supremum is a maximum.
\end{corollary}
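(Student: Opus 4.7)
The strategy is to reduce the statement to the Super-Replication Theorem~\ref{M1thm:superrep} by exhibiting $g_0$ from Assumption~\ref{ass} as the required convex, super-linearly growing ``pivot option.''

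\emph{Step 1: Properties of $g_0$.} First I would verify that $g_0\colon \R_+\to\R$ is a well-defined, continuous, convex, super-linear function. Convexity is immediate from convexity of each $\psi_n$. For any fixed $y$ only finitely many of the terms $\alpha_n \psi_n(y)$ are nonzero (since $K_n\to\infty$), and shifting by the finite constant $-\sum_n \alpha_n p_n$ makes $g_0(y)$ finite and continuous. Super-linearity follows from $\sum_n\alpha_n=\infty$: given $M>0$, choose $N$ with $\sum_{n\le N}\alpha_n\ge 2M$; reordering WLOG so that $K_n$ is non-decreasing, for $y\ge 2K_N$ each of the first $N$ terms satisfies $\psi_n(y)\ge y/2$, so $g_0(y)/y\ge M + o(1)$.

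\emph{Step 2: Application of the theorem.} Every $\pi\in\M_{(\varphi_i)_{i\in I}}$ satisfies $\int\psi_n(x_T)\,d\pi\le p_n$ because $\tilde\psi_n$ lies in $\{\varphi_i\}$, so monotone convergence gives $\int g_0(x_T)\,d\pi\le 0$. Hence appending $\varphi_0(x):=g_0(x_T)$ to the family leaves $\M_{(\varphi_i)_{i\in I}}$ unchanged. Moreover $\varphi_0^-\le\sum_n\alpha_n p_n<\infty$ while $m_0(x)\to\infty$ as $\|x\|\to\infty$, so $\varphi_0$ satisfies the growth condition \eqref{cond2}, as do the remaining $\varphi_i$ by Assumption~\ref{ass}. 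Theorem~\ref{M1thm:superrep} with $g:=g_0$ then yields
\begin{equation*}
\OM(\Phi)=\inf\left\{d:\exists\,a_0,\ldots,a_N\ge 0,\,\Delta\in\h,\ d+a_0\,g_0(S_T)+\sum_{k=1}^N a_k\varphi_{i_k}+(\Delta\sint x)_T\ge\Phi\right\}.
\end{equation*}

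\emph{Step 3: Conversion.} Substituting $a_0\,g_0(S_T)=\sum_{n\ge 1}a_0\alpha_n\tilde\psi_n$ and taking $b_n:=a_0$ transforms every super-hedge above into one of the form required by the corollary, giving $\OR(\Phi)\le\OM(\Phi)$. The reverse inequality follows by integrating any corollary-type super-hedge against an arbitrary $\pi\in\M_{(\varphi_i)_{i\in I}}$: the $\varphi_{i_k}$ terms are non-positive by admissibility, the tail series $\sum_n b_n\alpha_n\tilde\psi_n$ is handled via monotone convergence on the non-negative parts $b_n\alpha_n\psi_n(x_T)\ge 0$ together with finiteness of the constant $\sum_n b_n\alpha_n p_n$, and Remark~\ref{rmk.js} ensures that the stochastic-integral term contributes at most zero in expectation. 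Attainment of the supremum is inherited directly from Theorem~\ref{M1thm:superrep} applied to the equivalent admissible set. The main bookkeeping hurdle I expect is in this last step: justifying the interchange of integration and infinite summation for $\sum_n b_n\alpha_n\tilde\psi_n$ under the bound $\sup_n b_n\alpha_n<\infty$, and making sure that the constant series $\sum_n b_n\alpha_n p_n$ is indeed finite in every super-hedge one needs to treat (without which the pointwise super-hedging inequality itself would be vacuous).
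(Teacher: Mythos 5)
Your proposal is correct and follows essentially the same route as the paper, whose entire proof is the remark preceding the corollary: apply Theorem~\ref{M1thm:superrep} with $g=g_0$, noting that since the $\tilde\psi_n$ are already among the $\varphi_i$, adjoining $\varphi_0(x)=g_0(x_T)$ changes neither $\M_{(\varphi_i)_{i\in I}}$ nor the super-replication problem; your Steps 1--3 simply spell out the verifications (super-linearity of $g_0$, condition \eqref{cond2} for the augmented family, and the translation between the two dual formulations) that the paper leaves implicit. The only caveat, which the paper's one-line argument glosses over in exactly the same way, is that rewriting $a_0\,g_0(x_T)=\sum_{n\ge1}a_0\alpha_n\tilde\psi_n$ as a corollary-type hedge with $b_n:=a_0$ requires $\sup_n a_0\alpha_n<\infty$, i.e.\ boundedness of $(\alpha_n)_{n\ge1}$ (equivalently, reading the constraint as $\sup_n b_n<\infty$), while the converse direction is fine as you note, since any genuine super-hedge forces $\sum_n b_n\alpha_n p_n<\infty$ and monotone convergence plus Remark~\ref{rmk.js} then gives $\OM(\Phi)\le\OR(\Phi)$.
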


Note that this result can be easily put into a ``symmetric form'' including also $-\tilde \psi_n, n\geq 1$ in the family $(\varphi_i)_{i\in I}$.
Moreover, as a consequence of Corollary~\ref{M2thm:superrep}, we have the super-replication result under the assumption that the distribution $\nu$ of the asset at the terminal date $T$ is known. Here $I$ is simply taken to be the empty set, in which case we obtain exactly Corollary~\ref{thm:superrep_term_marg}.

We note that, for any convex super-linear function $\bar g:\R_+\to\R$ such that $\int_{\R_+} \bar g \,d\nu<\infty$, there exist constants $c$, $\alpha_n\geq 0$ and $K_n\nearrow\infty$ such that
\begin{equation}\label{eq.gak}
\bar g(y)\leq c+\sum_{n=1}^\infty\alpha_n(y-K_n)_+,\qquad \sum_{n=1}^\infty\alpha_n=\infty,\qquad \sum_{n=1}^\infty\alpha_np_n<\infty,
\end{equation}
where $p_n:=\int_{K_n}^\infty(y-K_n)\,d\nu(y)$.
Setting
\[\M_{(\varphi_i)_{i\in I}}(\nu):=\M_{(\varphi_i)_{i\in I}}\cap\M(\nu),\]
we obtain the following result.
\begin{corollary}\label{thm.nui}
Let $\varphi_i:\R_+^T\to\R, i\in I$ be continuous and growing at most linearly at infinity and assume $\M_{(\varphi_i)_{i\in I}}(\nu)\neq \emptyset$. For $\Phi:\R^T_+\to\R$ u.s.c.\ and linearly bounded from above we have
\begin{align}\label{eq.sup}
\OM(\Phi) :=& \sup_{\M_{(\varphi_i)_{i\in I}}(\nu)}\left\{\textstyle \int_{\R^T_+} \Phi(x)\,d\pi(x)\right\}\\
=&\ \  \inf\ \ \left\{\textstyle \int_{\R_+}\varphi(y)\,d\nu(y): \!\!
\begin{array}{l}
\varphi\in L^1(\nu), \exists\, \Delta\in\h, a_1,\ldots,a_N\geq 0, i_1,\ldots,i_N\in I, \\ 
\mbox{s.t. }\ \ 
\varphi(x_T)+\sum_{n=1}^N a_n\varphi_{i_n}(x)+(\Delta\sint x)_T\geq \Phi(x) 
\end{array}
\right\}=:\OR(\Phi).\nonumber
\end{align}
In addition, the above supremum is a maximum.

\smallskip

More generally these results hold true for $\phi_i, i\in I$ continuous and $\Phi$ u.s.c.\ if there exists a convex super-linear function $\tilde g:\R_+\to\R$ in $L^1(\nu)$ such that
\begin{equation}\label{eq.cond}
\lim_{\|x\|\to\infty} \tfrac{|\varphi_i(x)|}{\sum_{t=1}^T\tilde g(x_t)}<\infty,\quad \lim_{\|x\|\to\infty} \tfrac{\Phi(x)^+}{\sum_{t=1}^T\tilde g(x_t)}<\infty.
\end{equation}
\end{corollary}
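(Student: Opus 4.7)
The plan is to deduce Corollary~\ref{thm.nui} from Corollary~\ref{M2thm:superrep} by enlarging the family of constraints so as to simultaneously encode the whole marginal law $\nu$ of $S_T$ and to allow a general $L^1(\nu)$ payoff as a dual variable. First I set up the growth function. In the general case a convex super-linear $\tilde g\in L^1(\nu)$ is given; in the linearly bounded case one produces such a $\tilde g$ by applying de la Vall\'ee Poussin's theorem to the integrable random variable $S_T\sim\nu$. If necessary I upgrade $\tilde g$ to a faster-growing $\bar g\in L^1(\nu)$ satisfying $\tilde g/\bar g\to 0$ at infinity (again by de la Vall\'ee Poussin, this time applied to $\tilde g(S_T)$). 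Using \eqref{eq.gak} with $\bar g$ in place of $\tilde g$, I choose $\alpha_n,K_n$ and define $g_0$ and $m_0$ as in Assumption~\ref{ass}. By construction $\Phi^+/m_0\to 0$ and $|\varphi_i|/m_0\to 0$ as $\|x\|\to\infty$.

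Second, I enlarge $(\varphi_i)_{i\in I}$ to a family $(\tilde\varphi_j)_{j\in J}$ by adjoining, as continuous constraints on $\R_+^T$, both signs $\pm\tilde\psi_n$, $n\geq 1$, and both signs $\pm(h(x_T)-\int h\,d\nu)$ for every $h\in C_b(\R_+)$. All added functions grow at most linearly, so the growth bounds of Assumption~\ref{ass} with respect to $m_0$ are preserved. Moreover $\M_{(\tilde\varphi_j)_{j\in J}}=\M_{(\varphi_i)_{i\in I}}(\nu)$, because the constraints $\pm(h(x_T)-\int h\,d\nu)$ force $\int h(x_T)\,d\pi=\int h\,d\nu$ for every $h\in C_b(\R_+)$, which is a separating family and hence determines the $S_T$-marginal of $\pi$. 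Applying Corollary~\ref{M2thm:superrep} to $(\tilde\varphi_j)_{j\in J}$ gives the duality and the attainment of the primal, which coincides with $\OM(\Phi)$ of \eqref{eq.sup}.

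Finally I translate the dual side back. A super-replicating combination in the augmented problem has the form
\[
d + \sum_l a'_l\varphi_{i_l}(x) + \sum_m c_m\bigl(h_m(x_T)-\textstyle\int h_m\,d\nu\bigr) + \sum_n b_n\alpha_n\tilde\psi_n(x) + (\Delta\sint x)_T \ \geq\ \Phi(x),
\]
whose middle summands depend only on $x_T$ and reassemble into
\[
\varphi(y):=d+\sum_m c_m\bigl(h_m(y)-\textstyle\int h_m\,d\nu\bigr)+\sum_n b_n\alpha_n\bigl((y-K_n)_+-p_n\bigr)\in L^1(\nu),\qquad \textstyle\int\varphi\,d\nu=d.
\]
Hence $(\varphi,(a'_l,i_l),\Delta)$ is a dual candidate of Corollary~\ref{thm.nui} of cost $d$, so $\OR(\Phi)$ of \eqref{eq.sup} is bounded above by the augmented dual value; combined with $\OM(\Phi)\leq\OR(\Phi)$ from Remark~\ref{rmk.js} and the duality for the augmented family, equality holds throughout. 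The main obstacle is the growth-function bookkeeping: one must arrange that $m_0$ dominates $\Phi^+$ and $\varphi_i^\pm$ strictly enough to invoke both Assumption~\ref{ass} and Corollary~\ref{M2thm:superrep} (this is what forces the upgrade from $\tilde g$ to $\bar g$), and one must check that the reconstructed $\varphi$ is truly $\nu$-integrable. The latter uses $\sup_n b_n\alpha_n<\infty$ together with $\int\bar g\,d\nu<\infty$ to dominate $\sum_n b_n\alpha_n(y-K_n)_+$ by an $L^1(\nu)$ majorant, and $\sum_n\alpha_n p_n<\infty$ to ensure absolute convergence of the constant contribution $\sum_n b_n\alpha_n p_n$.
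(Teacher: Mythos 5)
Your proposal is correct and follows essentially the paper's own route: upgrade the growth function via de la Vall\'ee Poussin (Lemma~\ref{lemma.int}), represent it by call options as in \eqref{eq.gak} so that Assumption~\ref{ass} holds, enlarge the option family so that the admissible set becomes exactly $\M_{(\varphi_i)_{i\in I}}(\nu)$, apply Corollary~\ref{M2thm:superrep}, and reassemble the static part depending on $x_T$ into a single $\varphi\in L^1(\nu)$ with $\int_{\R_+}\varphi\,d\nu=d$ (where the finiteness of $\sum_n b_n\alpha_n p_n$ is in any case automatic, by Tonelli, from the super-hedge being real-valued). The only, immaterial, deviation is that you pin the terminal marginal by adjoining $\pm\bigl(h(x_T)-\int_{\R_+} h\,d\nu\bigr)$ for $h\in C_b(\R_+)$, whereas the paper uses the options $\pm\tilde\psi$ built from calls at all strikes; both choices are admissible since the index set $I$ is arbitrary.
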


\begin{proof}
Step 1. Let $\phi_i, i\in I$ be continuous and such that $\M_{(\varphi_i)_{i\in I}}(\nu)\neq \emptyset$, and $\Phi$ be u.s.c.\ and such that \eqref{eq.cond} holds for some convex super-linear function $\tilde g\in L^1(\nu)$.
By applying Lemma~\ref{lemma.int} to $f=\tilde g$, we obtain a convex super-linear function $\bar g$ in $L^1(\nu)$ such that
\[
\lim_{\|x\|\to\infty} \tfrac{\varphi_i(x)^-}{\sum_{t=1}^T\bar g(x_t)}=0,\quad \lim_{\|x\|\to\infty} \tfrac{\varphi_i(x)^+}{\sum_{t=1}^T\bar g(x_t)}=0,\quad \lim_{\|x\|\to\infty} \tfrac{\Phi(x)^+}{\sum_{t=1}^T\bar g(x_t)}=0.
\]
Now consider $\alpha_n\geq 0$ and $K_n\nearrow \infty$ as in \eqref{eq.gak}. We can include the corresponding functions $\tilde\psi_n$ defined as in \eqref{eq.psin} among the $\varphi_i$ since this neither changes the set of admissible martingale measures nor introduces arbitrage. Now, applying Corollary~\ref{M2thm:superrep} we obtain
\begin{align*}
\sup_{\M_{(\varphi_i)_{i\in I}}(\nu)}\left\{\textstyle \int_{\R^T_+} \Phi(x)\,d\pi(x)\right\} =& \inf\left\{\textstyle d: \!\!
\begin{array}{l}
\exists\, \Delta\in\h, a_1,\ldots,a_N\geq 0, i_1,\ldots,i_N\in I, b_n\geq 0,
\sup_n b_n\alpha_n<\infty\\ 
\mbox{s.t.}\ \ 
d+\sum_{n=1}^N a_n\varphi_{i_n}(x)+\sum_{n=1}^\infty b_n\alpha_n \tilde \psi_n+(\Delta\sint x)_T\geq \Phi(x) 
\end{array}
\right\}\nonumber\\[0.1cm]
\geq& \inf\left\{\textstyle \int_{\R_+}\varphi(y)\,d\nu(y): \!\!
\begin{array}{l}
\varphi\in L^1(\nu), \exists\, \Delta\in\h, a_1,\ldots,a_N\geq 0, i_1,\ldots,i_N\in I\\ 
\mbox{s.t.}\ \ 
\varphi(x_T)+\sum_{n=1}^N a_n\varphi_{i_n}(x)+(\Delta\sint x)_T\geq \Phi(x) 
\end{array}
\right\}.
\end{align*}
This gives $\OM(\Phi)\geq\OR(\Phi)$, hence $\OM(\Phi)=\OR(\Phi)$, the other inequality being trivial.
The fact that the supremum in \eqref{eq.sup} is attained is again obtained by standard arguments, cf. \cite[Theorem 1]{BeHePe11}.

Step 2. Let now $\varphi_i, i\in I$ be continuous and growing at most linearly at infinity and $\Phi$ be u.s.c.\ and linearly bounded from above.
By applying Lemma~\ref{lemma.int} to $f(x)=|x|$, we obtain a convex super-linear function $\bar f$ in $L^1(\nu)$ such that \eqref{eq.cond} is satisfied with $\bar g=\bar f$. Now we can apply Step 1, which concludes the proof.
\end{proof}

The following Lemma, used in the proof of Corollary \ref{thm.nui}, is a rather simple consequence of the de la Vall\'{e}e-Poussin Theorem.
\begin{lemma}\label{lemma.int}
Let $\mu$ be a probability measure on $\R_+$ having finite first moment and let $f:\R_+ \to\R$ be a convex function in $L^1(\mu)$. Then there exists a convex function $\bar f:\R_+\to\R$ in $L^1(\mu)$ such that $\frac{|\bar f(x)|}{|f(x)|}\to\infty$ as $x\to\infty$.
\end{lemma}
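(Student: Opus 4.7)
The plan is to apply the de la Vall\'ee-Poussin theorem, whose singleton form states: for any $h\in L^1(\mu)$ with $h\geq 0$, there exists a convex non-decreasing function $H:[0,\infty)\to[0,\infty)$ with $H(t)/t\to\infty$ as $t\to\infty$ such that $\int H(h)\,d\mu<\infty$. I cannot apply this directly to $|f|$ since $|f|$ need not be convex, so the first step is to replace $f$ by a convenient convex, non-negative majorant.

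Since $f$ is convex on $\R_+$, it is bounded below by an affine function $x\mapsto \alpha x+\beta$. Choose $c>0$ large enough that
\[
\tilde f(x):= f(x)+c(1+x)
\]
satisfies $\tilde f(x)\geq |f(x)|\vee 1$ for every $x\geq 0$. Then $\tilde f$ is convex (as the sum of convex functions), non-negative, belongs to $L^1(\mu)$ (because $f\in L^1(\mu)$ and $\mu$ has finite first moment), and tends to $+\infty$ as $x\to\infty$ since $c(1+x)$ dominates the affine lower bound on $f$.

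Next, apply de la Vall\'ee-Poussin to $\tilde f\in L^1(\mu)$ to obtain a convex non-decreasing function $H:[0,\infty)\to[0,\infty)$ with $H(t)/t\to\infty$ and $\int H(\tilde f(x))\,d\mu(x)<\infty$. Define
\[
\bar f(x):=H\bigl(\tilde f(x)\bigr).
\]
Then $\bar f$ is convex, being the composition of a convex non-decreasing function with a convex function; it belongs to $L^1(\mu)$ by construction; and using $\tilde f(x)\geq |f(x)|$ together with $\tilde f(x)\to\infty$,
\[
\frac{|\bar f(x)|}{|f(x)|}=\frac{H(\tilde f(x))}{|f(x)|}\;\geq\;\frac{H(\tilde f(x))}{\tilde f(x)}\;\longrightarrow\;\infty
\]
as $x\to\infty$, as required.

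The only slightly delicate point is preserving convexity of $\bar f$; this forces $H$ to be non-decreasing, which is built into the standard proof of de la Vall\'ee-Poussin. The other ingredient—finding a convex, non-negative, integrable majorant of $|f|$—is straightforward once one uses the finite first moment of $\mu$, since it guarantees that affine functions are $\mu$-integrable.
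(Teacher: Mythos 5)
Your proof is correct, and it takes exactly the route the paper intends: the paper states the lemma only as ``a rather simple consequence of the de la Vall\'{e}e-Poussin Theorem'' without written details, and your argument supplies those details in the natural way (pass to a convex, non-negative, $\mu$-integrable majorant $\tilde f\geq |f|$ using an affine minorant of $f$ and the finite first moment, then compose with the convex non-decreasing de la Vall\'{e}e-Poussin function $H$). The only point worth making explicit is that $c$ must also be taken large enough that $\tilde f(x)\to\infty$ as $x\to\infty$ (i.e.\ $c$ strictly exceeds minus the slope of the affine minorant), which is what justifies $H(\tilde f(x))/\tilde f(x)\to\infty$ in your last display.
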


It seems natural to assume that the market does not only yield information about the call options at the terminal time $T$. In fact,  in \cite{BeHePe11} a super-replication result is proved for the case where all marginals $S_t \sim \nu_t, t=1,\ldots, T$ are known. By Theorem 1 in \cite{BeHePe11} we have: 

\begin{corollary}\label{BHP}
Assume that $\nu_t, t=1, \ldots, T$ are probability measures on $\R_+$ with barycenter $S_0$ such that the set $\mathcal{M}(\nu_1,\ldots, \nu_T)$ of martingale measures $\pi$ satisfying $S_t(\pi)=\nu_t$ is non-empty. 
Let $\Phi:\R^T_+\to\R$ be u.s.c.\ and linearly bounded from above.
Then
\begin{align*}
\OM(\Phi) :=&\ \sup_{\pi\in\mathcal{M}(\nu_1, \ldots, \nu_T)}\left\{\textstyle \int_{\R^T_+} \Phi(x)\,d\pi(x)\right\}\\
=&\ \ \ \inf\ \ \Big\{\textstyle \sum_{t=1}^T\int_{\R_+}\varphi_t\,d\nu_t:\varphi_t\in L^1(\nu_t), \exists\, \Delta\in\h 
\ \textrm{s.t.}\ 
\sum_{t=1}^T\phi_t(x_t)+(\Delta\sint x)_T\geq \Phi(x) \Big\}=:\OR(\Phi).
\end{align*}
In addition, the above supremum is a maximum.
\end{corollary}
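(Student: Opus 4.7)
My strategy is to deduce Corollary~\ref{BHP} from Corollary~\ref{thm.nui} by absorbing the intermediate marginal constraints $S_t\sim\nu_t$, $t=1,\ldots,T-1$, into the family $(\varphi_i)_{i\in I}$ of traded options, while keeping $\nu:=\nu_T$ in its explicit role. Concretely, I would index $I$ by all pairs $(t,\psi)$ with $t\in\{1,\ldots,T-1\}$ and $\psi\in C_b(\R_+)$, and take the corresponding traded option to be $\psi(x_t)-\int\psi\,d\nu_t$, including also its negative. Each such $\varphi_i$ is continuous and bounded, so the linear-growth hypothesis of Corollary~\ref{thm.nui} is trivially satisfied. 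A probability $\pi$ meets $\int\varphi_i\,d\pi\le 0$ for both signs of every pair iff $\int\psi(x_t)\,d\pi=\int\psi\,d\nu_t$ for all $\psi\in C_b(\R_+)$, and since $C_b(\R_+)$ separates probability measures on $\R_+$ this is equivalent to $S_t(\pi)=\nu_t$. Hence $\M_{(\varphi_i)_{i\in I}}(\nu)=\M(\nu_1,\ldots,\nu_T)$, which is non-empty by assumption, so Corollary~\ref{thm.nui} applies and delivers duality together with attainment of the supremum.

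It remains to rewrite the dual produced by Corollary~\ref{thm.nui} into the form stated in Corollary~\ref{BHP}. Any finite sum $\sum_{n=1}^N a_n\varphi_{i_n}$ with $a_n\ge 0$ can be regrouped according to the time index $t<T$, yielding $\sum_{t=1}^{T-1}\bigl(\tilde\varphi_t(x_t)-\int\tilde\varphi_t\,d\nu_t\bigr)$ for suitable $\tilde\varphi_t\in C_b(\R_+)$. Combined with a terminal payoff $\varphi\in L^1(\nu_T)$ supplied by Corollary~\ref{thm.nui}, setting $\hat\varphi_t:=\tilde\varphi_t$ for $t<T$ and $\hat\varphi_T:=\varphi-\sum_{t<T}\int\tilde\varphi_t\,d\nu_t$ produces a super-hedge $\sum_{t=1}^T\hat\varphi_t(x_t)+(\Delta\sint x)_T\ge\Phi(x)$ of cost $\sum_{t=1}^T\int\hat\varphi_t\,d\nu_t=\int\varphi\,d\nu_T$, which is admitted on the right-hand side of Corollary~\ref{BHP}. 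This yields $\OR(\Phi)\le\OM(\Phi)$, since the class of super-hedges allowed in Corollary~\ref{BHP} is larger than the one just constructed (it permits $\varphi_t\in L^1(\nu_t)$ at every $t$, not only $C_b$ for $t<T$).

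The reverse inequality $\OM(\Phi)\le\OR(\Phi)$ is where I expect the main technical work to sit, although it amounts to a standard weak-duality argument whose only subtlety is integrability. Given a super-hedge $(\varphi_t)_{t=1}^T$ with $\varphi_t\in L^1(\nu_t)$ and $\Delta\in\h$, and any $\pi\in\M(\nu_1,\ldots,\nu_T)$ with $\int\Phi\,d\pi>-\infty$ (so that $\int\Phi^-\,d\pi<\infty$, noting that $\Phi^+$ is integrable since $\Phi$ is linearly bounded from above and $\pi$ has finite first moment), the pathwise bound $(\Delta\sint x)_T^-\le\sum_t|\varphi_t(x_t)|+\Phi^-$ together with $\sum_t\varphi_t\in L^1(\pi)$ gives $\int(\Delta\sint x)_T^-\,d\pi<\infty$. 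Remark~\ref{rmk.js} then promotes $(\Delta\sint x)$ to a true $\pi$-martingale with zero expectation, so taking $\pi$-expectations of $\Phi\le\sum_t\varphi_t(x_t)+(\Delta\sint x)_T$ yields $\int\Phi\,d\pi\le\sum_t\int\varphi_t\,d\nu_t$. Taking the supremum on the left and the infimum on the right completes the argument, and the attainment of the supremum transfers directly from Corollary~\ref{thm.nui}.
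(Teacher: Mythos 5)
Your proof is correct and takes essentially the same route as the paper: the paper also deduces Corollary~\ref{BHP} from Corollary~\ref{thm.nui} by adjoining to $(\varphi_i)_{i\in I}$ options that pin down the intermediate marginals, using the calls $\pm\tilde\psi_{k,t}$ (in the spirit of Breeden--Litzenberger) where you use $\pm\bigl(\psi(x_t)-\int\psi\,d\nu_t\bigr)$ for $\psi\in C_b(\R_+)$, an immaterial difference since both families characterize $\nu_1,\ldots,\nu_{T-1}$. Your regrouping of the dual by time index and the weak-duality step via Remark~\ref{rmk.js} are precisely the details the paper leaves implicit.
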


This follows precisely in the same way as Corollary~\ref{thm.nui}, by   including the options $\{\pm\tilde\psi_{k,t}, k\in\R_+, t=1,\ldots,T-1\}$ among the $(\varphi_i)_{i\in I}$, where
\[\tilde\psi_{k,t}(x):=(x_t-k)_+-\int_k^\infty(y-k)\,d\mu_t(y).\]

\section{Connection with Martingale-Inequalities}\label{SSSS}
In this section we illustrate how the Super-Replication Theorem~\ref{M1thm:superrep}  connects to the field of martingale inequalities. 
We will concentrate on  the particular case of the Doob-$L^1$ inequality. In its sharp version obtained by Gilat \cite{Gi86} it asserts that for every non-negative martingale $S=(S_t)_{t=0}^T$ starting at $S_0=1$ we have
\begin{align}\label{LogDoob}
  \E [\bar S_T]&\leq\frac{e}{e-1}\Big[\ \E[S_T\log(S_T)]+1\Big],
      \end{align}
where $\bar S_T$ is the supremum of $S$ up to time $T$.

Having Theorem~\ref{M1thm:superrep} in mind, it is natural to ask whether there exists a path-wise hedging inequality associated to it. This is indeed the case.
 
\begin{claim} Fix $C\geq 0$. For every $\eps>0$ there exist $a\geq 0$ and $\Delta$ such that
 \begin{align}\label{DoobSemiA}
\bar x_T \leq a \left( x_T\log (x_T) -  C \right)  + \frac{e}{e-1} (C+1) + \eps   +  (\Delta \sint x)_T
\end{align}
 for all $x_0, x_1, \ldots, x_T\in \R_+$.
\end{claim}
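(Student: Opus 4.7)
The plan is to apply Theorem~\ref{M1thm:superrep} with a single traded option $\varphi_0(x):=g(x_T)-C$, where $g(x):=x\log x$ (with $g(0):=0$) is continuous, convex and super-linear on $\R_+$. The admissibility condition $\int \varphi_0\,d\pi\le 0$ then translates into the moment bound $\E_\pi[S_T\log S_T]\le C$ on the class of admissible martingale measures.

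First I would verify the hypotheses of Theorem~\ref{M1thm:superrep}. The negative part $\varphi_0^-$ is bounded by $1/e+C$, while $\varphi_0^+(x)\le g(x_T)+|C|$ is dominated by $m(x):=\sum_{t=1}^T g(x_t)$, so both conditions in \eqref{cond2} hold. Non-emptiness of $\M_{\varphi_0}$ follows (for $S_0=1$) by considering the Dirac on the constant path, which is a martingale with $\E[g(S_T)]=0\le C$ for every $C\ge 0$. Taking $\Phi(x):=\bar x_T=\max_{0\le t\le T}x_t$, which is continuous and linearly bounded, the hypothesis \eqref{condwp} is satisfied since $m$ grows super-linearly along any direction to infinity while $\Phi$ grows only linearly.

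The key input is then Gilat's sharp inequality \eqref{LogDoob}: for any $\pi\in\M_{\varphi_0}$, admissibility yields
\[
\int_{\R^T_+}\bar x_T\,d\pi=\E_\pi[\bar S_T]\le \tfrac{e}{e-1}\bigl(\E_\pi[S_T\log S_T]+1\bigr)\le \tfrac{e}{e-1}(C+1),
\]
so that $\OM(\Phi)\le \tfrac{e}{e-1}(C+1)$. By the duality $\OR(\Phi)=\OM(\Phi)$ in Theorem~\ref{M1thm:superrep}, for every $\eps>0$ one can select $a\ge 0$ and $\Delta\in\h$ with
\[
\tfrac{e}{e-1}(C+1)+\eps+a\bigl(x_T\log x_T-C\bigr)+(\Delta\sint x)_T\ \ge\ \bar x_T
\]
for every path, which is exactly \eqref{DoobSemiA}.

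The argument is essentially a direct application of the machinery already established; the only minor point to check is the compatibility of the growth hypotheses, which is automatic for $\Phi(x)=\bar x_T$. The sharpness of the constant $\tfrac{e}{e-1}(C+1)$ is inherited from Gilat's inequality, and explains why the $\eps$-slack cannot be removed in general: the infimum defining $\OR$ need not be attained, as already noted in the discussion following Theorem~\ref{M1thm:superrep}.
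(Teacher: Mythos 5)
Your proposal is correct and follows essentially the same route as the paper: interpret $\varphi_0(x)=x_T\log(x_T)-C$ as a traded option and $\Phi(x)=\bar x_T$ as the derivative to be priced, note non-emptiness of the admissible martingale measures via the constant path, bound $\OM(\Phi)$ by $\tfrac{e}{e-1}(C+1)$ using Gilat's inequality \eqref{LogDoob}, and invoke the duality of Theorem~\ref{M1thm:superrep} to produce the semi-static super-hedge up to $\eps$. The only difference is that you spell out the verification of the growth conditions \eqref{cond2} and \eqref{condwp}, which the paper leaves implicit.
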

\begin{proof}
Fix $C$ and $\eps$. To establish a connection with the robust Super-Replication Theorem, we let $x_0:=1$ and interpret $\Phi(x_1, \ldots, x_T):=\bar x_T=\max(x_0,\ldots,x_T)$ and $\varphi(x_T):=x_T\log(x_T)$ as financial 
derivatives, where $\varphi$ can be bought at price $ C $ on the market. Our task is then to determine a reasonable upper bound for the price of $\Phi$. (Note that from $C\geq 0$ it follows that the set of admissible martingale measures is non-empty as witnessed by the constant process $S\equiv 1$.) By  \eqref{LogDoob} we have
\begin{align}\label{PrimalDoob}
\sup\left\{\int_{\R^T_+} \bar x_T \,d\pi: \pi\in \M, \int_{\R^T_+} x_T \log(x_T) \,d\pi
\leq C \right\} &\leq \frac{e}{e-1} (C+1),
\end{align} 
where $\M$ is the set of all martingale measures.
Applying  Theorem~\ref{M1thm:superrep} to $\Phi$ and $\varphi_0:=\varphi-C$ we thus obtain that $\bar S_T$ can be super-replicated path-wise  using an initial endowment of at  most $\frac{e}{e-1} (C+1)+\eps$.
 This is precisely what is asserted in \eqref{DoobSemiA}.
\end{proof}

These considerations provided the motivation to search for an explicit super-replication strategy for $\Phi(x)=\bar x_T$ (see \cite{AcBePeScTe12}). Indeed \eqref{DoobSemiA} holds (independent of $C$) for the particular choices  
  $a=\frac{e}{e-1}, \eps=0$ and $\Delta_t(x_1, \ldots,x_t)= -\log (\bar x_t)$, where it  corresponds to
\begin{align}
\bar x_T \leq   \frac{e}{e-1} \left(x_T\log (x_T) + 1 \right) - \left(\log (\bar x_t) \sint x \right)_T. \label{DoobSemi}
\end{align}
Let us stress that \eqref{DoobSemi} is simply an inequality for non-negative \emph{numbers} $x_1,\ldots, x_T$. Its verification, using convexity of $x\mapsto x \log(x)$, is entirely elementary (\cite[Proposition 2.1]{AcBePeScTe12}).  An application of  \eqref{DoobSemi} is that it implies Doob's $L^1$-inequality:
\begin{proof}[Proof of \eqref{LogDoob}.] Apply \eqref{DoobSemi} to the paths of $(S_n)_{n=0}^T $ and take expectation to obtain 
\begin{align*}
\E[ \bar S_T] \leq\ & \frac{e}{e-1}\big[\E[ S_T\log (S_T)] + 1\big] - \E[ (\log (\bar S_t) \sint S)_T] \\ =\ &\frac{e}{e-1}\big[\E[ S_T\log (S_T)] + 1\big].\qedhere
\end{align*}
\end{proof}
We emphasize that by Theorem~\ref{M1thm:superrep} one knows \emph{a priori} that a  path-wise hedging strategy exists and hence that the Doob $L^1$-inequality can be proved \emph{in this way}. 
In particular one expects that the same strategy of proof can be applied  to a variety of other inequalities.

\bibliography{joint_biblio}{}

\bibliographystyle{alpha}

\end{document}